\documentclass[12pt]{article}%
\usepackage[intlimits]{amsmath}
\usepackage{amssymb}
\usepackage{latexsym}
\usepackage{tikz}
\usepackage{anysize}
\usepackage[T1]{fontenc}
\usepackage[sc]{mathpazo}
\usepackage{color}
\usepackage[colorlinks]{hyperref}
\usepackage{amsfonts}
\usepackage{graphicx}
\usepackage{float}
\usepackage{subfig}
\usepackage{comment}
\setcounter{MaxMatrixCols}{30}
\linespread{1.1}
\definecolor {refcol}{RGB}{40,0,255}
\hypersetup{colorlinks=true,allcolors=refcol}
\setlength{\textwidth}{7.0in} \setlength{\textheight}{9.0in}
\setlength{\topmargin}{-15pt} \setlength{\headsep}{0pt}
\setlength{\headheight}{0pt} \setlength{\oddsidemargin}{-16pt}
\setlength{\evensidemargin}{-16pt}
\makeatletter
\newfont{\footsc}{cmcsc10 at 8truept}
\newfont{\footbf}{cmbx10 at 8truept}
\newfont{\footrm}{cmr10 at 10truept}
\pagestyle{plain}
\newtheorem{theorem}{Theorem}

\newtheorem{corollary}{Corollary}
\newtheorem{Res}{Result}

\newtheorem{definition}{Definition}
\newtheorem{example}{Example}

\newtheorem{lemma}{Lemma}

\newtheorem{proposition}[theorem]{Proposition}

\newtheorem{remark}{Remark}

\newenvironment{proof}[1][Proof]{\noindent{\textbf {#1}  }}  {\hfill$\Box$\bigskip}

\begin{document}

\title{Spectral properties of the deformed Laplacian matrix of trees and \emph{H-join} graphs}

\author{Roberto C. D\'{i}az$^1$, ~~\ Elismar R. Oliveira$^2$, ~~\ Vilmar Trevisan$^2$\\
\small{$^{1}${\it Departamento de Matematicas, Universidad Cat\'olica del Norte,}} \\
\small{\noindent Avenida Angamos 0610, Antofagasta, Chile} \\
\small{\noindent  \texttt{rdiaz01@ucn.cl}} \\
\small{$^{2}${\it Instituto de Matem\'atica e Estat\'{\i}stica, Universidade Federal do Rio Grande do Sul,}} \\
\small{Porto Alegre, Brasil} \\
\small{\texttt{elismar.oliveira@ufrgs.br,~~trevisan@mat.ufrgs.br}}
}
\date{}
\maketitle
\begin{abstract}
This paper investigates  spectral properties of the deformed Laplacian matrix, which merges the Laplacian and signless Laplacian matrices of a graph through a one-parameter family of matrices. We present general results on the eigenvalues of these matrices for simple undirected graphs. Additionally, we analyze the spectrum of the deformed Laplacian in the specific cases of trees and H-join graphs. For trees, we derive strong results on the localization of eigenvalues, while for H-join graphs, we explicitly compute the spectrum of the deformed Laplacian.

\end{abstract}

\noindent  \textbf{MSC 2010}: 05C50, 05C76, 05C35, 15A18.\\
\noindent   \textbf{Keywords}: Graph operations, deformed Laplacian matrix, eigenvalues.

\section{Introduction}
The concept of the \emph{deformed Laplacian} matrix was introduced by F. Morbidi in 2013~\cite{Mor2013}, within the context of a generalized continuous-time consensus protocol. In this framework, the traditional Laplacian matrix of the communication graph is replaced by a deformed version. The stability of this modified protocol is then analyzed by exploring the role of a real parameter $s$, using spectral theory techniques from quadratic eigenvalue problems, applied to both specific families of graphs (undirected and directed) and more general structures. Theoretical insights are supported by examples and simulations.

In a different but related direction, the authors of~\cite{GDV2018} proposed a novel centrality measure based on nonbacktracking walks — walks that avoid revisiting nodes in patterns such as $uvu$, where $u$ and $v$ are distinct adjacent vertices. This approach is argued to yield more informative rankings than traditional walk-based centralities. Interestingly, the authors demonstrate that this Katz-style centrality can be computed using a form of the deformed Laplacian matrix. Under certain conditions, this measure aligns with the nonbacktracking variant of eigenvector centrality introduced in a 2014 study.

Additionally, a distinct formulation of the deformed Laplacian appeared in~\cite{DCT2019}, where the spectral properties of another variation were examined in the context of community detection in sparse networks. This highlights the versatility and growing relevance of deformed Laplacians across different applications in network science.

Let $G$ be a simple undirected graph with $n$ vertices. Denote by $A$ its adjacency matrix, by $D$ the diagonal degree matrix, and by $I$ the $n \times n$ identity matrix. For a real scalar $s$, the deformed Laplacian matrix of $G$ is defined as:
\begin{align}\label{MG}
M_G(s) &= I - sA + s^2(D - I) \notag \\ \notag \\
&= \left[
    \begin{array}{ccccc}
      1+ s^2 (d_{v_1}-1) & -s a_{1 \, 2} & \cdots & -s a_{1\, n-1} & -s a_{1 \, n} \\
      -s a_{1 \, 2} & 1 + s^2 (d_{v_2}-1) & \cdots & -s a_{2\, n-1} & -s a_{2 \,n} \\
      \vdots & \vdots & \ddots & \vdots & \vdots \\
      -s a_{n-1\,1} & -s a_{n-1\,2} & \cdots & 1+ s^2 (d_{v_{n-1}}-1) & -s a_{n-1\,n} \\
      -s a_{n \,1} & -s a_{n\,2} & \cdots & -s a_{n\,n-1} & 1+ s^2 (d_{v_n}-1) \\
    \end{array}
  \right].
\end{align}

The main objective of this paper is to explore the spectral properties of the deformed Laplacian matrix within a general graph-theoretical framework. While this matrix originally emerged in the context of consensus protocols in network dynamics, our aim is to study its mathematical structure from a broader spectral perspective. We focus, in particular, on the behavior of its eigenvalues and spectral radius, deriving specific and detailed results for classes of graphs such as trees. To achieve this, we employ eigenvalue localization techniques from~\cite{JT2011},~\cite{livro}, and~\cite{OliveTrevAppDiff}, which have proven to be powerful tools in the spectral analysis of symmetric matrices associated with graphs.

Our motivation is twofold. First, it stems from the growing interest in parametrized families of matrices that interpolate between classical graph matrices. A striking example is the work of V. Nikiforov~\cite{nikiforov2017Aalpha}, who introduced the $A_\alpha$ matrix of a graph, defined as $A_G(\alpha) := \alpha D(G) + (1 - \alpha) A(G)$, where $0 \leq \alpha \leq 1$. This formulation provides a smooth transition between the adjacency matrix $A(G)$ and the signless Laplacian $Q(G) = D(G) + A(G)$, and has opened new directions in spectral graph theory.

In a similar spirit, the deformed Laplacian matrix can be interpreted as a bridge between the standard Laplacian matrix $L = D - A$ and the signless Laplacian. This interpolation offers a promising avenue for generalizing and unifying spectral results associated with both matrices. Moreover, the structure of the deformed Laplacian reveals new algebraic features that may have applications not only in theoretical studies, but also in practical areas such as network robustness, centrality measures, and graph-based machine learning.

Therefore, our investigation aims not only to extend the theoretical understanding of this matrix, but also to contribute to the broader program of uncovering the interplay between combinatorial structure and spectral properties in graphs.

Part of this paper is also devoted to determining the spectrum of the deformed Laplacian matrix for graphs constructed via the $H$-join operation~\cite{H2}, applied to a family of regular graphs. This construction allows for the synthesis of complex graph topologies from simpler building blocks, making it a valuable tool for both theoretical investigations and practical applications. By exploiting the regularity of the constituent graphs and the structure imposed by the $H$-join, we are able to derive explicit expressions for the eigenvalues of the resulting deformed Laplacian matrix. These spectral characterizations not only enrich the theoretical understanding of the $H$-join operation in the context of deformed Laplacians, but also highlight the interplay between local regularity and global structure in determining spectral properties. The analytical results obtained in this section contribute to the broader effort of identifying graph classes for which the spectrum of the deformed Laplacian can be computed in closed form.

\bigskip

\noindent The structure of the paper is organized as follows:
In Section~\ref{sec:preliminaries}, we review key definitions and notational conventions.
Section~\ref{sec:The spectral radius of a subgraph} focuses on the spectral radius, showing that it does not increase when passing to subgraphs.
In Section~\ref{sec:Some bounds on the spectral radius of the deformed Laplacian}, we present general upper bounds for the spectral radius of the deformed Laplacian.
Section~\ref{sec:Spectrum of deformed Laplacian associated to a tree} analyzes its behavior on trees, drawing parallels with classical Laplacian and signless Laplacian matrices in bipartite graphs.
Section~\ref{sec:Spectrum of the deformed Laplacian associated to a H-join graph} concludes by determining the spectrum of the deformed Laplacian for $H$-join graphs.

\section{Preliminaries}\label{sec:preliminaries}

Let $G=\left( V\left( G\right) ,E\left( G\right) \right)$ be a simple undirected graph of order $n$. Let us denote by $\sigma(M)=\left\{\lambda^{[m_{1}]}_{1},\lambda^{[m_{2}]}_{2},\ldots,\lambda^{[m_{r}]}_{r}\right\}$, the spectrum (the multiset of eigenvalues) of a square matrix $M$, where $m_{i}$ stands for the multiplicity of eigenvalues $\lambda_{i}$, $i=1,\ldots,r$ An eigenvalue of a matrix $M$ will be denoted by $\lambda(M)$. Given two disjoint vertex graphs $G_{1}$ and $G_{2},$ the {\it join} of $G_{1}$ and $G_{2}$, denoted by $G=G_{1}\vee G_{2}$, has a vertex set $V\left( G\right)=V\left( G_{1}\right) \cup V\left( G_{2}\right)$ and $E\left( G\right) = E(G_{1})\cup E(G_{2}) \cup \left\{ xy:x\in V\left(G_{1}\right) ,y\in V\left( G_{2}\right) \right\}.$

This join operation can be generalized as follows \cite{H1,H2}: Let $H=(V(H),E(H))$ be a graph of order $r$, with vertex set $V(H)=\{1,\ldots,r\}$, and let $\left\{G_{1},\ldots ,G_{r}\right\} $ be a set of pairwise vertex disjoint graphs. For $1 \leq i \leq r$, the vertex $i\in V(H)$ is assigned to the graph $G_{i}$. Now let $G$ be the graph obtained from the graphs $G_{1},\ldots ,G_{r}$ and the edges connecting each vertex of $G_{i}$ with all the vertices of $G_{j}$ if and only if $ij\in E\left( H\right).$ That is,
$G$ is the graph with vertex set $V(G)=\bigcup_{i=1}^{r}{V(G_{i})}$ and edge set
\begin{equation*}
E(G)=\left( \bigcup_{i=1}^{r}{E(G_{i})}\right) \cup \left( \bigcup_{ij\in E\left( H\right) }{\{uv:u\in V(G_{i}),v\in V(G_{j})\}}\right).
\end{equation*}
This graph operation was introduced in \cite{H1} denominated $H-join$ of the graphs $G_{1},\ldots ,G_{r}$ and denoted by
\begin{equation*}
G=\bigvee\limits_{H}{\{G_{i}:1\leq i\leq r\}}.
\end{equation*}

It is clear that if each $G_{i}$ is a graph of order $n_{i}$, then the $H-join$ obtained $G$ is a graph of order $n=n_{1}+n_{2}+\ldots +n_{r}$ (for more details on the $H-$join operation, see \cite{Schwenk74, Stevanovic09}).

Returning to the deformed Laplacian matrix $M_G(s)$, it is easy to see that for a fixed graph $G$,  $M_G(0)=I$, $M_G(1)=I - A + (D-I)=D-A=L(G)$ is the combinatorial Laplacian matrix of $G$ and $M_G(-1)=I + A + (D-I)=D+A=Q(G)$ is the signless Laplacian matrix of $G$. As we shall see later, the intervals $(-1,0)$, $(0,1)$ and their complements play a very different role with respect to the spectral properties of the deformed Laplacian matrix. This motivates the following notation for $s$.
\begin{definition}\label{def:adapted}
  We say that $s\in \mathbb{R}$ is sub-Laplacian if $|s|<1$. Analogously, $s$ is super-Laplacian if $|s|>1$. Given $\lambda>1$ a real number, we say that $s$ is adapted to $\lambda$ if $\lambda>(1+|s|)^2$.
\end{definition}

We also notice that this is a symmetric matrix underlying the graph $G$, that is, a weighted matrix of $G$. Thus, we can use an algorithm to locate eigenvalues such as Diagonalize from \cite{JT2011} to study its eigenvalues when $G$ is a tree. The Diagonalize algorithm is given below.

{ \tt
\begin{tabbing}
aaa\=aaa\=aaa\=aaa\=aaa\=aaa\=aaa\=aaa\= \kill
     \> Input: matrix $M = (m_{ij})$ with underlying tree $T$\\
     \> Input: Bottom up ordering $v_1,\ldots,v_n$ of $V(T)$\\
     \> Input: real number $x$ \\
     \> Output: diagonal matrix $D = \mbox{diag}(d_1, \ldots, d_n)$ congruent to $M + xI$ \\
     \> \\
     \>  Algorithm $\mbox{Diagonalize}(M, x)$ \\
     \> \> initialize $d_i := m_{ii} + x$, for all $i$ \\
     \> \> {\bf for } $k = 1$ to $n$ \\
     \> \> \> {\bf if} $v_k$ is a leaf {\bf then} continue \\
     \> \> \> {\bf else if} $d_c \neq 0$ for all children $c$ of $v_k$ {\bf then} \\
     \> \> \>  \>   $d_k := d_k - \sum \frac{(m_{ck})^2}{d_c}$, summing over all children of $v_k$ \\
     \> \> \> {\bf else } \\
     \> \> \> \> select one child $v_j$ of $v_k$ for which $d_j = 0$  \\
     \> \> \> \> $d_k  := -\frac{(m_{jk})^2}{2}$ \\
     \> \> \> \> $d_j  :=  2$ \\
     \> \> \> \> if $v_k$ has a parent $v_\ell$, remove the edge $\{v_k,v_\ell\}$. \\
     \> \>  {\bf end loop} \\
\end{tabbing}
}

The following theorem summarizes the way in which the algorithm will be applied. Its proof is based on a property of matrix congruence known as Sylvester's Law of Inertia, we refer to~\cite{livro} for details.
\begin{theorem}\label{thm:inertia}
Let $M$ be a symmetric matrix of order $n$ that corresponds to a weighted tree $T$ and let $x$ be a real number. Given a bottom-up ordering of $T$, let $D$ be the diagonal matrix produced by Algorithm Diagonalize with entries $T$ and $x$. The following hold:
\begin{itemize}
    \item[(a)] The number of positive entries in the diagonal of $D$ is the number of eigenvalues of $M$ (including multiplicities) that are greater than $-x$.
    \item[(b)] The number of negative entries in the diagonal of $D$ is the number of eigenvalues of $M$ (including multiplicities) that are less than $-x$.
    \item[(c)] The number of zeros in the diagonal of $D$ is the multiplicity of $-x$ as an eigenvalue of $M$.
\end{itemize}
\end{theorem}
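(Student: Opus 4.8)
The plan is to reduce everything to two ingredients: the fact that the algorithm realizes a sequence of congruences, and Sylvester's Law of Inertia. Recall that two real symmetric matrices $B$ and $C$ are congruent if $C = P^{\top} B P$ for some invertible $P$, and that Sylvester's law asserts that congruent symmetric matrices share the same inertia, i.e. the same number of positive, negative and zero eigenvalues. The output specification of Diagonalize already records that the returned $D$ is congruent to $M + xI$; once this is justified, statements (a)--(c) are immediate. Indeed, since $D$ is diagonal its eigenvalues are exactly its diagonal entries $d_1,\dots,d_n$, so the numbers of positive, negative and zero $d_i$ equal the numbers of positive, negative and zero eigenvalues of $M+xI$. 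Finally, the eigenvalues of $M + xI$ are $\lambda + x$ as $\lambda$ ranges over $\sigma(M)$, and $\lambda + x > 0$, $<0$, $=0$ is equivalent to $\lambda > -x$, $\lambda < -x$, $\lambda = -x$ respectively. This yields (a), (b), (c) at once.

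So the real task is to prove that Diagonalize transforms $M + xI$ into $D$ through congruences. I would argue this by induction on the bottom-up ordering $v_1,\dots,v_n$, tracking an implicit full symmetric matrix (of which the algorithm stores only the diagonal) and showing that after processing $v_k$ this matrix is still congruent to $M+xI$ while the rows and columns of the already-processed vertices have been reduced to their final diagonal values. The initialization $d_i := m_{ii}+x$ simply copies the diagonal of $M+xI$. When $v_k$ is a leaf nothing is done, which is consistent. When $v_k$ is internal and every child $c$ has $d_c \neq 0$, the assignment $d_k := d_k - \sum_c (m_{ck})^2/d_c$ is exactly the effect on the $(k,k)$ entry of the symmetric elimination that uses each nonzero pivot $d_c$ to clear the off-diagonal entry $m_{ck}$, via the congruence applied simultaneously to row $k$ and column $k$ of the form $R_k \mapsto R_k - (m_{ck}/d_c) R_c$. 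Because $T$ is a tree processed from the leaves upward, at the moment $v_k$ is handled the only surviving off-diagonal entry in the column of each child $c$ is the one linking $c$ to $v_k$; hence these congruences create no fill-in among unprocessed vertices and decouple each processed child, leaving it with its final diagonal entry $d_c$.

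The delicate point, and the step I expect to be the main obstacle, is the zero-pivot case, where some child $v_j$ has $d_j = 0$ and cannot serve as a pivot. Here I would isolate the $2\times 2$ principal submatrix indexed by $\{v_j, v_k\}$, which by the inductive decoupling has the form $\left[\begin{smallmatrix} 0 & m_{jk}\\ m_{jk} & d_k\end{smallmatrix}\right]$ with determinant $-m_{jk}^2 < 0$. Such a block has inertia $(1,1,0)$, and an explicit congruence sends it to $\mathrm{diag}\left(2,\, -(m_{jk})^2/2\right)$, matching the algorithm's assignments $d_j := 2$ and $d_k := -(m_{jk})^2/2$. What must be checked carefully is that the accompanying instruction to delete the edge $\{v_k,v_\ell\}$ to the parent $v_\ell$ is itself realized by a congruence: once the $\{v_j,v_k\}$ block has been diagonalized using $m_{jk}\neq 0$ as pivot, the entry coupling $v_k$ to $v_\ell$ can be cleared by a further symmetric elimination, so that $v_k$ is effectively detached from the remainder of the tree and the induction proceeds on the pruned tree. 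Establishing that this pruning is an exact congruence — and thus preserves inertia — is precisely the subtle bookkeeping underlying the algorithm of~\cite{JT2011}; granting it, the induction completes, $D$ is congruent to $M+xI$, and (a)--(c) follow from Sylvester's law as above.
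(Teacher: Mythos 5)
The paper itself gives no proof of this theorem: it only remarks that the result rests on Sylvester's Law of Inertia and defers to~\cite{livro}. Your reconstruction follows exactly the route the paper invokes: show that Diagonalize realizes a congruence of $M+xI$ to $D$, then read off (a)--(c) from the invariance of inertia under congruence. Your reduction of (a)--(c) to the congruence claim, the induction along the bottom-up ordering, the no-fill-in observation in the nonzero-pivot case, and the identification of the block $\left[\begin{smallmatrix} 0 & m_{jk}\\ m_{jk} & d_k\end{smallmatrix}\right]$ as having inertia $(1,1,0)$, hence congruent to $\mathrm{diag}\bigl(2,-(m_{jk})^2/2\bigr)$, are all correct and match the argument of~\cite{JT2011}.

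The step you explicitly leave open --- that deleting the edge $\{v_k,v_\ell\}$ is an exact congruence --- is the genuine gap, and the order you propose (diagonalize the $\{v_j,v_k\}$ block first, then clear the coupling to $v_\ell$ afterwards) makes it harder than necessary: once the block is diagonal, naively eliminating the $(k,\ell)$ entry with pivot $d_k$ would change $d_\ell$ by $-(m_{k\ell})^2/d_k\neq 0$, which the algorithm does not do; moreover the congruence $P$ that diagonalizes the block spreads the coupling into row $v_j$ as well, so one must check that the net effect on $d_\ell$ of clearing both transformed couplings vanishes (it does, since it equals $-m_{k\ell}^2\,(B^{-1})_{kk}=0$ for $B=\left[\begin{smallmatrix} 0 & m_{jk}\\ m_{jk} & d_k\end{smallmatrix}\right]$, a Schur-complement computation you never perform). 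The clean fix is to clear \emph{first}: at the moment $v_k$ is processed, row $v_j$ has zero diagonal and $m_{jk}$ as its only nonzero entry, so the congruence $R_\ell \mapsto R_\ell-(m_{k\ell}/m_{jk})R_j$ (and the same on columns) annihilates the $(k,\ell)$ entry while altering no diagonal entry and creating no fill-in, because $(j,j)$ and $(j,\ell)$ both vanish ($v_j$ and $v_\ell$ are not adjacent in a tree). The same elimination handles a case your sketch omits entirely: when $v_k$ has other children $c\neq v_j$, the else-branch of the algorithm simply discards their couplings $m_{ck}$ (it never subtracts their contributions), and this is legitimate only because row $v_j$ clears every entry $(c,k)$ without touching any $d_c$; until that is done, the $\{v_j,v_k\}$ principal submatrix is not actually decoupled, contrary to what ``by the inductive decoupling'' suggests. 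With these eliminations in place the block is isolated, your inertia argument closes the zero-pivot case, and the induction --- and hence the theorem via Sylvester's law, exactly as in~\cite{livro} --- goes through.
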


\begin{example}\label{ex:sun_diagon}
Consider the star graph $T=K_{1,n}$. In order to locate the eigenvalue $\lambda$ for the deformed Laplacian matrix $M_T(s)$, we use $\mbox{Diagonalize}(M_{T}(s), -\lambda)$ initialized as Figure~\ref{fig:sun_tree_diag}.
\begin{figure}[H]
  \centering
  \includegraphics[width=11cm]{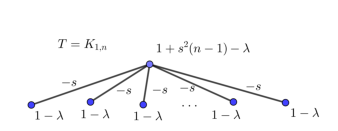}
  \caption{Initial values for the graph $K_{1,n}$.}\label{fig:sun_tree_diag}
\end{figure}
For $\lambda = 1$ we get $n-1$ zeros at the leafs and the pair $(2,-\frac{s^2}{2}$). According to Theorem~\ref{thm:inertia} the spectrum is composed by $n-1$ copies of $\lambda = 1$, an eigenvalue strictly smaller than 1  (so $\lambda = \lambda_{min}$) and other strictly greater than  $\lambda = 1$, the spectral radius (so $\lambda = \lambda_{max}=\rho(M_T(s))$).\\
Applying the algorithm with an eigenvalue $\lambda \not =1 $ we obtain $1-\lambda \neq 0$ in each leaf and the only zero must be the value at the root
$$1+ s^2(n-1) - \lambda -\frac{ns^2}{1 - \lambda} =0$$
whose solutions are
\begin{align*}
 \lambda_{max/min}= \frac{1}{2}\left(s^{2}(n-1)+2\pm s\sqrt{s^{2}(n-1)^{2}+4n}\right)   
\end{align*}
the smaller is the minimum eigenvalue and the other is  $\lambda=\rho(M_T(s))$ (associated to the signal +).
\end{example}

We denote by $d_n(s):=\frac{1} {n}\left(\sum\limits_{i=1}^{n}\lambda_{i}(M_{G}(s))\right)$, the average of the deformed Laplacian eigenvalues of a graph $G$. It is well known that if $G$ is a graph on $n$ vertices and $m$ edges, then $\sum\limits_{i=1}^{n}d_{v_{i}}=2m$. Taking this into account, the following result follows.
\begin{proposition}\label{prop: average eigen}
Let $G$ be a graph on $n$ vertices and $m$ edges. Then,
\begin{align*}
    d_n(s)=\frac{1}{n}\left(\sum\limits_{i=1}^{n}\lambda_{i}(M_{G}(s))\right)=1-s^{2}+\overline{d}s^{2},
\end{align*}
where $\overline{d}=2m/n$ is the average degree of $G$.
\end{proposition}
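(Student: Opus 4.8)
The plan is to compute the sum of eigenvalues via the trace, since for any square matrix the sum of the eigenvalues (with multiplicities) equals the trace. So the first step is to observe that
\[
\sum_{i=1}^{n}\lambda_{i}(M_{G}(s)) = \operatorname{tr}\bigl(M_{G}(s)\bigr),
\]
and then read off the diagonal of $M_G(s)$ from its defining expression $M_G(s) = I - sA + s^2(D-I)$.

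Next I would split the trace additively across the three terms. The identity $I$ contributes $\operatorname{tr}(I)=n$. The term $-sA$ contributes nothing, because the adjacency matrix of a simple graph has zero diagonal, so $\operatorname{tr}(A)=0$. The term $s^2(D-I)$ contributes $s^2\sum_{i=1}^{n}(d_{v_i}-1)$. Equivalently, one can just look at the $i$-th diagonal entry of $M_G(s)$, which the matrix display gives as $1+s^2(d_{v_i}-1)$, and sum over $i$.

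Combining, the trace equals
\[
\operatorname{tr}\bigl(M_{G}(s)\bigr) = n + s^2\sum_{i=1}^{n}(d_{v_i}-1) = n + s^2\Bigl(\sum_{i=1}^{n}d_{v_i} - n\Bigr).
\]
At this point I would invoke the handshake identity $\sum_{i=1}^{n}d_{v_i}=2m$, already recalled in the text, to get $\operatorname{tr}(M_G(s)) = n + s^2(2m-n)$. Dividing by $n$ and writing $\overline{d}=2m/n$ yields
\[
d_n(s) = \frac{1}{n}\bigl(n + 2m\,s^2 - n\,s^2\bigr) = 1 - s^2 + \overline{d}\,s^2,
\]
which is the claimed formula.

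There is no real obstacle here: the statement is an elementary trace computation, and the only point requiring any care is bookkeeping the diagonal contributions of the three summands (in particular noting that $A$ contributes zero). The result could also be sanity-checked against the known specializations noted earlier, namely $M_G(1)=L(G)$ and $M_G(-1)=Q(G)$, both of which have trace $2m$, matching the formula at $s=\pm 1$ where $d_n(\pm 1)=\overline{d}=2m/n$.
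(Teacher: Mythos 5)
Your proof is correct and follows essentially the same route as the paper: both compute $\sum_i \lambda_i(M_G(s)) = \operatorname{tr}(M_G(s)) = n + s^2\sum_i (d_{v_i}-1)$, apply the handshake identity $\sum_i d_{v_i} = 2m$, and divide by $n$. Your explicit remark that $\operatorname{tr}(A)=0$ and the sanity check at $s=\pm 1$ are nice touches but do not change the argument.
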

\begin{proof} \ From \eqref{MG} we have that
    \begin{align*}
    \sum\limits_{i=1}^{n}\lambda_{i}(M_{T}(s)) \ = \ tr(M_{T}(s)) \ &= \ tr(I-sA+s^{2}(D-I)) \\ \\ &= n+s^{2}\sum\limits_{i=1}^{n}(d_{v_{i}}-1) \\ \\ &= n+2ms^{2}-ns^{2} \ = \ (1-s^{2})n+2ms^{2}.
\end{align*}
When we divide the latter by $n$, the result follows.
\end{proof}

The average of the eigenvalues is a parameter of interest in spectral graph theory. For the combinatorial Laplacian (when $s=1$), it is quite important to estimate the number of eigenvalues in certain intervals and is the main theme of several research papers.  We notice that when $G=T$ is a tree, then $m=n-1$ and the general result of Proposition~\ref{prop: average eigen}  becomes
   \begin{equation}\label{eq:dnfortrees}
          d_n(s):=\frac{1}{n}\left(\sum\limits_{i=1}^{n}\lambda_{i}(M_{G}(s))\right)= 1+\frac{n-2}{n}\, s^2,
    \end{equation}
and, in particular, when $s=1$, Equation~\eqref{eq:dnfortrees} is specialized to $2-\frac{2}{n}$. For years, it has been conjectured that most (at least half) of the eigenvalues of any tree of the combinatorial Laplacian are small, that is, smaller than the average $2-\frac{2}{n}$. The proof of this result can be found in \cite{JOT21}.

We note that the formula~\eqref{eq:dnfortrees} is quite interesting since $\frac{n-2}{n}<1$ means $d_n(s) \leq 1+ s^2$, and if $n \to \infty$ then $d_n(s) \to 1+ s^2$. Later, in Proposition~\ref{prop:properties for trees} we will prove that if $T=T'\oplus P_2$ (a tree $T'$ having a pendant path $P_2$) then $\rho(M_T(s))>1+s^2$. Thus it is a very interesting question to predict how many eigenvalues are in the interval $[d_n,  \rho(M_T(s))]$, denoted $\sigma(T)$. We notice that the only possibility for a tree having $\rho(M_T(s))$ close to $1+s^2$ is it to be ``$P_n$ free" for $n\geq 2$, that is, for caterpillars, where the interval $[d_n,  \rho(M_T(s))]$ will be very narrow. Again, by Proposition~\ref{prop:properties for trees} we know that $\rho(M_{T}(s))>1+\sqrt{3}|s|+s^2$ because $\Delta(T)=3$, otherwise $T$ will be a path. So, the length of the interval is at least $\rho(M_T(s))-d_n > \sqrt{3}|s|$. We point out that for $s=\pm 1$ we obtain $d_n=2 - 2/n$ and by \cite{JOT21} we know that $\sigma(T)\leq \lfloor \frac{n}{2}\rfloor$. Those and other related questions would be a subject of additional research.

\section{The spectral radius of a subgraph} \label{sec:The spectral radius of a subgraph}

An expected property for a matrix that represents a graph is that the spectral radius of a subgraph is smaller than the spectral radius of the original graph. We prove that this property holds for the deformed Laplacian matrix, but not for all values of the parameter $s$. In the particular case of trees we will see that we can obtain a stronger claim, see Proposition~\ref{prop:properties for trees}.

\begin{example}\label{ex: not true}
We point out that the deletion of one edge does not necessarily decrease the spectral radius of $M_G(s)$ for $0<s<1$. Consider, for instance, $s=\frac{3}{4}$ and $G$ (see Figure~\ref{fig:graphs})   and $G'$  the graph obtained from $G$ by removing the edge $\{1,2\}$. The adjacency matrix of $G$ is
   \[A=
   \left[ \begin {array}{ccccc} 0&1&1&1&1\\ \noalign{\medskip}1&0&1&0&1
\\ \noalign{\medskip}1&1&0&1&0\\ \noalign{\medskip}1&0&1&0&1
\\ \noalign{\medskip}1&1&0&1&0\end {array} \right]
   \]
\begin{figure}[H]
  \centering
  \includegraphics[width=9cm]{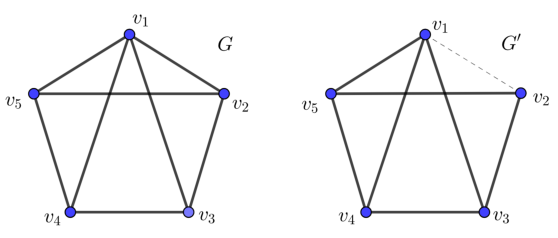}
  \caption{Graphs $G$ (left) and $G'$(right).}\label{fig:graphs}
\end{figure}

  A direct computation shows that $\rho(M_G(s))=3.6250000 < 3.6517332= \rho(M_{G'}(s))$.
\end{example}

Let $G'$ be an undirected simple graph obtained from $G$ by removing the edge $\{v_1, v_2\}$. We now prove that if $s \in \mathbb{R}\setminus (1,0)$, then $\rho(M_G(s)) \geq \rho(M_{G'}(s))$.

In this case the deformed Laplacian matrix is
\begin{align}\label{MGprime}
M_{G'}(s)=
  \left[
    \begin{array}{ccccc}
      1+ s^2 (d_{v_1}-2) & 0 & \cdots & -s a_{1\, n-1} & -s a_{1 \, n} \\
      0 & 1 + s^2 (d_{v_2}-2) & \cdots & -s a_{2\, n-1} & -s a_{2 \,n} \\
      \vdots & \vdots & \ddots & \vdots & \vdots \\
      -s a_{n-1\,1} & -s a_{n-1\,2} & \cdots & 1+ s^2 (d_{v_{n-1}}-1) & -s a_{n-1\,n} \\
      -s a_{n \,1} & -s a_{n\,2} & \cdots & -s a_{n\,n-1} & 1+ s^2 (d_{v_n}-1) \\
    \end{array}
  \right].
\end{align}

Taking into account \eqref{MG} and \eqref{MGprime}, let us define the difference matrix
\begin{align}\label{Hs}
H(s)= M_G(s)-M_{G'}(s)=
  \left[
    \begin{array}{ccccc}
      s^2  & -s & \cdots & 0 & 0 \\
      -s & s^2 & \cdots & 0 & 0 \\
      \vdots & \vdots & \ddots & \vdots & \vdots \\
      0 & 0 & \cdots & 0 & 0 \\
      0 & 0 & \cdots & 0 & 0 \\
    \end{array}
  \right].
\end{align}

The matrix $H$ has some interesting properties, for instance, if $\mathcal{B}=\{e_1, \ldots, e_n\}$ is the canonical basis of $\mathbb{R}^{n}$ then
\[
\left\{
  \begin{array}{ll}
    H e_1= s^2 e_1 - s e_2 \\
     H e_2= - s e_1+ s^2 e_2 \\
    H e_i=0,i=3,\ldots, n
  \end{array}
\right.
\]

As a consequence $H(e_1 + e_2)= (s^2  - s) (e_1 + e_2)$ and $H(e_1 - e_2)= (s^2  + s) (e_1 - e_2)$. In particular $\mathcal{C}=\{u_1, u_2, e_3, \ldots, e_n\}$ is an ortogonal basis consisting of eigenvectors of $H$, where, $u_1 = e_1 + e_2$ is associated with the eigenvalue $\lambda_1=s^2  - s$ and $u_2 = e_1 - e_2$ is associated with the eigenvalue $\lambda_2=s^2 + s$. Some useful relations are
\begin{equation}\label{eq:eq1}
\frac{\lambda_1 + \lambda_2}{2} = s^2 ~~\text{ and } ~~\frac{\lambda_1 - \lambda_2}{2} = -s.
 \end{equation}

We now recall the Rayleigh quotient. If $B$ is an  $n \times n$ real symmetric matrix then
\begin{equation}\label{eq:ray}
\rho(B) = \sup_{x \in \mathbb{R}^{n}/ \{0\}} \frac{\langle x, B x\rangle}{\langle x,  x\rangle} = \sup_{\{x \in \mathbb{R}^{n}\, | \, |x|=1\}} \langle x, B x\rangle.
\end{equation}
\begin{remark}\label{rem:nonposit_H}
Notice that if $|s|> 1$ then $H$ is positive semidefinite, that is, $\langle x, H x\rangle=x^{T}Hx \geq 0$.
\end{remark}

\begin{lemma}(Corollary 8.1.19 \bf{Horn and Johnson})\label{HaJ}
Let $A$ and $B$ be two nonnegative matrices such that $0\leq A\leq B$. Then, $\rho(A)\leq \rho(B)$.
\end{lemma}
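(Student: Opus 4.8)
The plan is to recognize this as the classical Perron--Frobenius monotonicity of the spectral radius and to reduce it to two ingredients: that the entrywise order $0 \leq A \leq B$ is inherited by all matrix powers, and that $\rho$ can be recovered from the growth of these powers. Note first that the symmetric Rayleigh-quotient tool of \eqref{eq:ray} is not directly available here, since nonnegative matrices need not be symmetric; the essential hypothesis is nonnegativity, not symmetry, and indeed eigenvalues are not monotone under the entrywise order in general, so nonnegativity must be used in an essential way.

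The first step is to prove by induction that $0 \leq A^k \leq B^k$ entrywise for every $k \geq 1$. The base case is the hypothesis. For the inductive step one writes $A^{k+1} = A\,A^k$ and inserts the intermediate matrix $A\,B^k$: since $A \geq 0$ and $A^k \leq B^k$, left multiplication by the nonnegative matrix $A$ gives $A\,A^k \leq A\,B^k$; and since $B^k \geq 0$ and $A \leq B$, right multiplication by the nonnegative matrix $B^k$ gives $A\,B^k \leq B\,B^k = B^{k+1}$. Nonnegativity of $A^{k+1}$ is immediate. The second step is to choose a matrix norm that is monotone on nonnegative matrices; the entrywise sum $\|M\| = \sum_{i,j}|m_{ij}|$ works, because $0 \leq A^k \leq B^k$ forces $\|A^k\| \leq \|B^k\|$, and a short computation shows this norm is submultiplicative. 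The final step invokes Gelfand's formula $\rho(M) = \lim_{k\to\infty}\|M^k\|^{1/k}$ (valid for any such norm), which turns the inequality $\|A^k\| \leq \|B^k\|$ into $\rho(A) = \lim_k \|A^k\|^{1/k} \leq \lim_k \|B^k\|^{1/k} = \rho(B)$.

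An alternative route stays closer to spectral theory: by the Perron--Frobenius theorem, $\rho(A)$ is an eigenvalue of the nonnegative matrix $A$ admitting a nonnegative eigenvector $x \geq 0$, $x \neq 0$, with $Ax = \rho(A)\,x$. Then $Bx \geq Ax = \rho(A)\,x$ entrywise, and the Collatz--Wielandt characterization $\rho(B) = \max_{y \geq 0,\, y \neq 0}\ \min_{i:\, y_i > 0} (By)_i/y_i$ immediately yields $\rho(B) \geq \rho(A)$. I expect the main obstacle to be a bookkeeping one rather than a conceptual one: in the power-based proof the work is verifying submultiplicativity of the chosen norm and quoting Gelfand cleanly, while in the Perron--Frobenius route the genuine subtlety is that $A$ may be reducible, so its Perron vector is only nonnegative (not strictly positive) and one cannot divide blithely by its entries. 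This is precisely why the Collatz--Wielandt formula, which restricts the minimum to indices with $y_i > 0$, is the right tool and a naive strict-positivity argument is not. Given the statement is quoted verbatim as Corollary~8.1.19 of Horn and Johnson, either route closes the lemma at once.
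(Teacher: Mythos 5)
Your proof is correct, but there is nothing in the paper to compare it against: the paper states this lemma as a quotation of Corollary~8.1.19 of Horn and Johnson and uses it as a black box (its only role is the sub-Laplacian case $-1<s<0$ of Theorem~\ref{thm:main_decreasing_rho}, where $0\leq M_{G'}(s)\leq M_{G}(s)$ entrywise). Both of your routes are sound and would serve as a self-contained replacement for the citation. The power/Gelfand argument is complete as written: the induction $0\leq A^{k}\leq B^{k}$ via the interpolant $AB^{k}$ is exactly right, the entrywise sum norm is monotone on nonnegative matrices and submultiplicative, and Gelfand's formula (valid for any submultiplicative matrix norm) closes the argument; this route is the most elementary, requiring no Perron--Frobenius theory at all. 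Your second route is also valid, and you correctly identify the one genuine subtlety: for a reducible nonnegative $A$ the Perron vector is only nonnegative, so the Collatz--Wielandt max-min characterization, with the minimum restricted to the support of $y$, is needed --- a naive division by the entries of $x$ would indeed fail. One small observation: your remark that the Rayleigh identity \eqref{eq:ray} is unavailable is apt for the lemma as stated, since it covers nonsymmetric matrices; but in the paper's actual application the matrices are symmetric, and there a three-line hybrid works --- take a unit Perron vector $x\geq 0$ of $A$ (which exists by Perron--Frobenius), so that $\rho(A)=x^{T}Ax\leq x^{T}Bx\leq\rho(B)$, the middle inequality using $A\leq B$ and $x\geq 0$, the last using \eqref{eq:ray} together with the fact that $\rho(B)$ is the largest eigenvalue of a symmetric nonnegative matrix. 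Your point that monotonicity of $\rho$ genuinely fails without nonnegativity (e.g.\ $A=-I\leq 0=B$ entrywise, yet $\rho(A)=1>0=\rho(B)$) is well taken and explains why the hypothesis $0\leq A$ cannot be dropped.
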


\begin{theorem}\label{thm:main_decreasing_rho} Let $G$ be a undirected simple graph containing the edge $\{v_1, v_2\}$ and $G'$ obtained from $G$ by removing the edge $\{v_1, v_2\}$.   If $s \in (-\infty,0] \cup [1, +\infty)$, then $$\rho(M_G(s)) \geq \rho(M_{G'}(s)).$$ 
\end{theorem}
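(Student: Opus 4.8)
The plan is to prove the inequality via the Rayleigh quotient characterization~\eqref{eq:ray} together with the structure of the difference matrix $H(s)$ established above. Since $M_G(s) = M_{G'}(s) + H(s)$, I would start by picking a unit eigenvector $w$ of $M_{G'}(s)$ realizing its spectral radius, so that $\rho(M_{G'}(s)) = \langle w, M_{G'}(s)\, w\rangle$. Then
\begin{align*}
\rho(M_G(s)) \;\geq\; \langle w, M_G(s)\, w\rangle \;=\; \langle w, M_{G'}(s)\, w\rangle + \langle w, H(s)\, w\rangle \;=\; \rho(M_{G'}(s)) + \langle w, H(s)\, w\rangle,
\end{align*}
so the whole theorem reduces to showing that the correction term $\langle w, H(s)\, w\rangle$ is nonnegative for the chosen $w$ when $s \in (-\infty,0] \cup [1,+\infty)$.

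First I would dispose of the case $|s| \geq 1$ (covering $(-\infty,-1] \cup [1,+\infty)$) immediately using Remark~\ref{rem:nonposit_H}: there $H(s)$ is positive semidefinite, hence $\langle w, H(s)\, w\rangle \geq 0$ for \emph{every} vector $w$, and the displayed chain closes. The remaining interval is $[-1,0]$, where $H(s)$ is no longer semidefinite, so the argument must use the specific choice of $w$. Here I would expand $\langle w, H(s)\, w\rangle$ in the eigenbasis $\mathcal{C} = \{u_1, u_2, e_3, \ldots, e_n\}$ of $H$: writing $w$ in terms of the components along $u_1 = e_1 + e_2$ and $u_2 = e_1 - e_2$, the quadratic form is $\lambda_1 = s^2 - s$ times the $u_1$-coefficient squared plus $\lambda_2 = s^2 + s$ times the $u_2$-coefficient squared. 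For $s \in [-1,0]$ one has $\lambda_1 = s^2 - s \geq 0$, but $\lambda_2 = s^2 + s = s(s+1) \leq 0$, so positivity is not automatic and I must control the sign or relative size of the $u_2$-component of $w$.

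The main obstacle will therefore be the sub-Laplacian sliver $s \in [-1,0)$: I need an argument that the spectral-radius eigenvector $w$ of $M_{G'}(s)$ is essentially nonnegative (Perron-type), so that its coordinates $w_1, w_2$ in positions $1,2$ have the right relationship to kill or dominate the negative $\lambda_2$ contribution. Concretely, $\langle w, H(s)\, w\rangle = s^2(w_1^2 + w_2^2) - 2 s\, w_1 w_2$, and for $s \leq 0$ the cross term $-2 s\, w_1 w_2 = 2|s|\, w_1 w_2$ is nonnegative precisely when $w_1 w_2 \geq 0$. The natural route is to observe that for $s \in [-1,0]$ the off-diagonal entries $-s\, a_{ij}$ of $M_{G'}(s)$ are nonnegative, making $M_{G'}(s)$ entrywise a nonnegative (indeed essentially nonnegative/Perron–Frobenius) matrix, so by the Perron–Frobenius theorem its leading eigenvector $w$ can be taken entrywise nonnegative; then $w_1 w_2 \geq 0$ and both terms of $\langle w, H(s)\, w\rangle$ are nonnegative, closing the chain. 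I would invoke Lemma~\ref{HaJ} or a direct Perron–Frobenius argument to secure the nonnegativity of $w$, taking care of the boundary points $s = 0$ (where $H(s) = 0$ trivially) and $s = -1$ (where it overlaps with the semidefinite case) as consistency checks.
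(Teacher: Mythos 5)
Your proposal is correct, and in the super-Laplacian case $|s|\geq 1$ it coincides with the paper's argument (Rayleigh quotient via~\eqref{eq:ray} plus the positive semidefiniteness of $H(s)$ from Remark~\ref{rem:nonposit_H}). Where you genuinely diverge is the sub-Laplacian case $-1<s<0$. The paper notes that on this range $H(s)\geq 0$ and $M_{G'}(s)\geq 0$ \emph{entrywise} (diagonal entries $1+s^{2}(d-1)\geq 1-s^{2}\geq 0$, off-diagonal entries $-s\,a_{ij}\geq 0$), so that $0\leq M_{G'}(s)\leq M_{G}(s)$, and then concludes in one line by citing the Horn--Johnson monotonicity lemma (Lemma~\ref{HaJ}). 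You exploit the same entrywise nonnegativity differently: Perron--Frobenius applied to the symmetric nonnegative matrix $M_{G'}(s)$ gives $\rho(M_{G'}(s))=\lambda_{\max}(M_{G'}(s))$ together with an entrywise nonnegative unit eigenvector $w$, and then $\langle w,H(s)w\rangle=s^{2}(w_{1}^{2}+w_{2}^{2})-2s\,w_{1}w_{2}\geq 0$ term by term since $s\leq 0$ and $w_{1},w_{2}\geq 0$, which closes the chain $\rho(M_{G}(s))\geq\langle w,M_{G}(s)w\rangle=\rho(M_{G'}(s))+\langle w,H(s)w\rangle$. This is in effect an inlined proof of the symmetric case of Lemma~\ref{HaJ}: your route is more self-contained (no external citation) at the cost of a few extra lines, and it has the small technical advantage that Perron--Frobenius legitimately identifies $\rho(M_{G'}(s))$ with the Rayleigh maximum in this range, whereas~\eqref{eq:ray} as stated tacitly assumes the largest eigenvalue dominates in modulus --- an assumption both you and the paper make in the case $|s|\geq 1$, where it can actually fail for $M_{G'}(s)$ if removing the edge creates an isolated vertex (there $1-s^{2}$ is an eigenvalue and $|1-s^{2}|$ may exceed $\lambda_{\max}$); this caveat is inherited from the paper, not introduced by you. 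Two minor corrections: Lemma~\ref{HaJ} asserts monotonicity of the spectral radius, not nonnegativity of an eigenvector, so your parenthetical suggestion to invoke it for the nonnegativity of $w$ is a mis-citation --- the direct Perron--Frobenius argument you also offer (or simply replacing $w$ by its entrywise absolute value, which cannot decrease $\langle w,M_{G'}(s)w\rangle$ when $M_{G'}(s)\geq 0$) is the right justification; and since $G'$ may be disconnected you should only claim a nonnegative, not positive, eigenvector, which is all your computation needs.
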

\begin{proof}
From Equation~\eqref{eq:ray}, we have 
\begin{align*}
\rho(M_G(s) ) = \sup_{\{x \in \mathbb{R}^{n}\, | \, |x|=1\}} \left\{\langle x, M_G(s)  x\rangle\right\}
&= \sup_{\{x \in \mathbb{R}^{n}\, | \, |x|=1\}} \left\{\langle x, (M_{G'}(s) +H) x\rangle\right\} \\
&= \sup_{\{x \in \mathbb{R}^{n}\, | \, |x|=1\}} \left\{\langle x, M_{G'}(s) x\rangle + \langle x, H x\rangle\right\}.
\end{align*}
From Remark~\ref{rem:nonposit_H} we know that if $|s|\geq 1$  then $\lambda_1=s^2  - s \geq 0$ and  $\lambda_2=s^2 + s \geq 0$ so the theorem follows for super-Laplacian values of $s$, because $\langle x, H x\rangle \geq 0$ means $\rho(M_G(s)) \geq \rho(M_{G'}(s))$ in the above formula.

So we need to prove only the sub-Laplacian values of $s$, that is $-1<s<0$. In this situation, we always have $\lambda_1=s^2  - s >0$ and  $\lambda_2=s^2 + s<0$. Thus $\langle x, H x\rangle$ can change sign. However, notice that for $-1<s<0$ the matrix defined in \eqref{Hs} satisfies
$$H(s)= M_G(s)-M_{G'}(s)\geq 0.$$
Also, in \eqref{MGprime}, we have $0\leq M_{G'}(s)$. Consequently, $0\leq M_{G'}(s)\leq M_{G}(s)$. Therefore, from Lemma \ref{HaJ}, it follows that $\rho(M_{G'}(s)) \leq \rho(M_{G}(s))$.
\end{proof}

From Theorem~\ref{thm:main_decreasing_rho} we can conclude the following result:

\begin{corollary}\label{DET} Let $G$ be an undirected simple graph and let $G'$ any subgraph of $G$. Then, $\rho(M_G(s)) \geq \rho(M_{G'}(s))$, for all $s \in (-\infty,0] \cup [1, +\infty)$.
\end{corollary}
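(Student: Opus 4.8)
The plan is to derive Corollary~\ref{DET} from Theorem~\ref{thm:main_decreasing_rho} by an edge-by-edge induction. The key observation is that any subgraph $G'$ of $G$ on the same vertex set can be reached from $G$ by successively deleting edges one at a time. (If $G'$ has fewer vertices, we may pad it with isolated vertices without changing its nonzero spectrum, since an isolated vertex contributes a block $[1]$ to $M_{G'}(s)$ and hence only the eigenvalue $1$, which does not exceed the spectral radius.) Thus it suffices to control what happens under a single edge deletion and then iterate.

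First I would fix $s \in (-\infty,0]\cup[1,+\infty)$ and enumerate the edges in $E(G)\setminus E(G')$ as $e_1,\ldots,e_k$, forming the chain of graphs
\begin{align*}
G = G_0 \supseteq G_1 \supseteq \cdots \supseteq G_k = G',
\end{align*}
where $G_{j}$ is obtained from $G_{j-1}$ by removing the single edge $e_j$. Each step is precisely the situation addressed by Theorem~\ref{thm:main_decreasing_rho}: deleting one edge from a graph, applied with the relevant pair of endpoints relabelled as $v_1,v_2$. Since the hypothesis on $s$ is identical to that of the theorem, each step yields $\rho(M_{G_{j-1}}(s)) \geq \rho(M_{G_{j}}(s))$.

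Chaining these inequalities together gives
\begin{align*}
\rho(M_G(s)) = \rho(M_{G_0}(s)) \geq \rho(M_{G_1}(s)) \geq \cdots \geq \rho(M_{G_k}(s)) = \rho(M_{G'}(s)),
\end{align*}
which is the desired conclusion. The induction is on $k$, the number of deleted edges, with base case $k=0$ trivial and the inductive step supplied verbatim by the theorem.

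The main obstacle, such as it is, is conceptual rather than computational: I must be careful that Theorem~\ref{thm:main_decreasing_rho} is stated for deleting a single labelled edge $\{v_1,v_2\}$, and I should make explicit that edge labels can be permuted freely, since the spectral radius is invariant under the simultaneous permutation of rows and columns of $M_{G}(s)$ (a relabelling of vertices is a similarity by a permutation matrix). The only genuinely delicate point is the subgraph-versus-induced-subgraph distinction and the handling of a subgraph on a strictly smaller vertex set; once the isolated-vertex padding argument is in place, the rest is a clean telescoping of the single-edge inequality.
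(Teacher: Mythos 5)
Your overall strategy is exactly the one the paper intends: the paper gives no separate proof of Corollary~\ref{DET}, deriving it from Theorem~\ref{thm:main_decreasing_rho} by the evident iteration, and your edge-by-edge telescoping chain $G=G_0\supseteq G_1\supseteq\cdots\supseteq G_k$, together with the remark that relabelling vertices is a permutation similarity, is precisely that argument made explicit. Your handling of a subgraph on a strictly smaller vertex set is in fact more careful than the paper, which silently ignores this case.

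However, your justification of the padding step contains a factual error. An isolated vertex $v$ does \emph{not} contribute a block $[1]$ to $M(s)=I-sA+s^2(D-I)$: since $d_v=0$, its block is $[\,1-s^2\,]$, contributing the eigenvalue $1-s^2$ rather than $1$. Moreover, your claim that this eigenvalue ``does not exceed the spectral radius'' can fail: for $G'=K_2$ one has $M_{K_2}(s)=I-sA$ with $\rho(M_{K_2}(s))=1+|s|$, while $|1-s^2|=s^2-1>1+|s|$ whenever $|s|>2$ (e.g.\ $s=-3$ gives $\rho(M_{K_2}(-3))=4$ but the isolated vertex contributes $-8$), so padding can strictly increase the spectral radius. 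Fortunately this does not break your proof, because the inequality you need goes the right way: if $G''$ denotes $G'$ padded with isolated vertices to the vertex set of $G$, then $M_{G''}(s)$ is block diagonal with $M_{G'}(s)$ as a block, hence $\sigma(M_{G'}(s))\subseteq\sigma(M_{G''}(s))$ and $\rho(M_{G''}(s))\geq\rho(M_{G'}(s))$ automatically; combining this with $\rho(M_G(s))\geq\rho(M_{G''}(s))$ from the edge-deletion chain yields the corollary. So the slip is repairable by a one-line correction (replace the false equality claim by this containment of spectra), and with that fix your proof is complete and matches the paper's route.
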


\section{Bounds for the spectral radius of the deformed Laplacian}\label{sec:Some bounds on the spectral radius of the deformed Laplacian}
We start with the following lower bound.
\begin{theorem}
Let $G$ be a simple, undirected and connected graph of order $n$ with maximum degree $\Delta$. If $s\in (-\infty, 0]\cup [1,+\infty)$, then
\begin{align*}
\rho(M_{G}(s))\geq \frac{1}{2}\left(s^{2}(\Delta-1)+2+\vert s\vert\sqrt{s^{2}(\Delta-1)^{2}+4\Delta}\right).
\end{align*}
The equality holds if and only if $G\cong K_{1,n-1}$.
\end{theorem}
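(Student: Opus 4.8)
The plan is to bound $\rho(M_G(s))$ from below by the largest eigenvalue of the star $K_{1,\Delta}$ sitting at a vertex of maximum degree, since Example~\ref{ex:sun_diagon} (with central degree $\Delta$) already tells us that the top eigenvalue of $M_{K_{1,\Delta}}(s)$ is exactly the proposed right-hand side $R(s):=\tfrac12\big(s^2(\Delta-1)+2+|s|\sqrt{s^2(\Delta-1)^2+4\Delta}\big)$, attained by the symmetric eigenvector taking value $a$ at the center and a common value $b$ at every leaf. Let $v$ be a vertex with $d_v=\Delta$ and $w_1,\dots,w_\Delta$ its neighbors; the induced spokes form a copy of $K_{1,\Delta}$ inside $G$.

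First I would take $x\in\mathbb{R}^n$ equal to that star eigenvector on $\{v,w_1,\dots,w_\Delta\}$ and zero elsewhere, and feed it into the Rayleigh quotient \eqref{eq:ray}. Using $M_G(s)=I-sA+s^2(D-I)$ from \eqref{MG}, I expand
\[
\langle x,M_G(s)x\rangle=|x|^2-2s\!\!\sum_{\{i,j\}\in E(G)}\!\!x_ix_j+s^2\sum_i(d_{v_i}-1)x_i^2 .
\]
Because $x$ is supported on the closed neighborhood of $v$, comparison with the isolated star leaves only two sources of discrepancy: the $k$ edges of $G$ joining two neighbors $w_i,w_j$, and the fact that each $w_i$ may have degree $d_{w_i}\ge1$ rather than $1$. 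A short computation gives
\[
\langle x,M_G(s)x\rangle-\langle x,M_{K_{1,\Delta}}(s)x\rangle=b^2\Big(s^2\sum_{i=1}^{\Delta}(d_{w_i}-1)-2sk\Big).
\]
Counting endpoints at the neighbors yields $\sum_i(d_{w_i}-1)=2k+\ell\ge2k\ge0$, where $\ell\ge0$ counts edges from the $w_i$ to the outside. For $s\le0$ both $s^2\sum_i(d_{w_i}-1)\ge0$ and $-2sk\ge0$, while for $s\ge1$ one uses $s^2\ge s$ to get $s^2\sum_i(d_{w_i}-1)\ge2s^2k\ge2sk$; hence the correction is nonnegative on the whole stated range. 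Dividing by the common norm $|x|^2=a^2+\Delta b^2$ gives $\rho(M_G(s))\ge R(s)$. (The inequality alone also follows instantly from Corollary~\ref{DET} applied to the spanning subgraph obtained by deleting every edge not incident to $v$, whose deformed Laplacian is block-diagonal with star block of top eigenvalue $R(s)$ and isolated-vertex blocks $1-s^2\le R(s)$.)

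For the equality statement I would read the chain $R(s)=\langle x,M_{K_{1,\Delta}}(s)x\rangle/|x|^2\le\langle x,M_G(s)x\rangle/|x|^2\le\rho(M_G(s))$ backwards: equality forces the correction to vanish and $x$ to be a top eigenvector of $M_G(s)$. Since $2k+\ell\ge2k$, vanishing of $b^2\big(s^2(2k+\ell)-2sk\big)=b^2\big(2k(s^2-s)+s^2\ell\big)$ forces $k=\ell=0$ whenever $s<0$ or $s>1$, i.e.\ every neighbor of $v$ is a leaf; connectivity then gives $G\cong K_{1,n-1}$, and the converse is Example~\ref{ex:sun_diagon}. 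The hard part is precisely this equality analysis at the boundary: at $s=1$ the correction reduces to $b^2\ell$, which vanishes for every graph with a universal vertex (for instance $R(1)=\Delta+1$ is the classical Laplacian bound, met by $K_n$), and at $s=0$ the bound is trivial. I would therefore establish uniqueness of the extremal graph for $s<0$ and $s>1$, where the sign of $s^2-s$ is strict, and record $s\in\{0,1\}$ as genuinely exceptional.
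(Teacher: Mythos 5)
Your inequality proof is correct, and your parenthetical remark is in fact the paper's entire proof: the authors simply note that $K_{1,\Delta}$ is a subgraph of $G$ and combine Corollary~\ref{DET} with the star spectrum \eqref{I1}. Your main argument is genuinely different: planting the top eigenvector of $M_{K_{1,\Delta}}(s)$ at a maximum-degree vertex, extending by zero, and expanding the Rayleigh quotient, you obtain the exact correction $b^{2}\bigl(2k(s^{2}-s)+s^{2}\ell\bigr)$, which is nonnegative precisely on the stated range of $s$. This is self-contained (it bypasses Theorem~\ref{thm:main_decreasing_rho} and the Horn--Johnson comparison lemma), it makes visible why the hypothesis $s\in(-\infty,0]\cup[1,+\infty)$ is needed (it is exactly where $s^{2}-s\geq 0$), and, unlike the monotonicity route, it produces an explicit quantity whose vanishing can be interrogated in the equality case. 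Two small repairs: writing $R(s)$ for the right-hand side of the bound, in your block-diagonal variant the isolated-vertex blocks must be compared in absolute value, $|1-s^{2}|\leq R(s)$ when $|s|>1$ (true, since $R(s)\geq s^{2}(\Delta-1)+1$); and reading the chain backwards requires $b\neq 0$, which holds for $s\neq 0$ because the leaf eigen-equation $(1-R(s))b=sa$ together with $R(s)>1$ would otherwise force $a=b=0$.

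On the equality statement you are right and the paper is not: the paper's proof establishes only the inequality and never addresses the equality claim at all, and the characterization as printed is false at the endpoints, exactly as you observe. At $s=0$ both sides equal $1$ for every graph, and at $s=1$ the bound reads $\rho(L(G))\geq\Delta+1$, which is attained by every graph with a universal vertex (e.g.\ $K_{n}$, where $\rho(L(K_{n}))=n=\Delta+1$), not only by stars. Your restriction of the uniqueness claim to $s<0$ and $s>1$ — where $s^{2}-s>0$ and $s^{2}>0$ make the correction strictly positive unless $k=\ell=0$, after which connectivity forces $G\cong K_{1,n-1}$ — is the correct statement. So rather than containing a gap, your proposal exposes and repairs one in the paper itself; if you write it up, state the theorem with the equality case restricted to $s\in(-\infty,0)\cup(1,+\infty)$ and record $s\in\{0,1\}$ as genuinely exceptional, as you propose.
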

\begin{proof}
It is known that the deformed Laplacian spectrum of a star (see Example~\ref{ex:sun_diagon}) of order $n$ is given by:
\begin{align}\label{I1}
\sigma(K_{1,n})=\left\{\frac{1}{2}\left(s^{2}(n-1)+2\pm s\sqrt{s^{2}(n-1)^{2}+4n}\right), \ 1^{[n-1]}\right\}.
\end{align}
Since $G$ has maximum degree $\Delta$, $K_{1,\Delta}$ is a proper subgraph of $G$. From Corollary \ref{DET} and \eqref{I1}, it follows that
\begin{align*}
	\rho(M_{G}(s)) \geq \rho(M_{K_{1,\Delta}}(s)) = \frac{1}{2}\left(s^{2}(\Delta-1)+2+\vert s\vert\sqrt{s^{2}(\Delta-1)^{2}+4\Delta}\right).
\end{align*}
\end{proof}

We next provide the following upper bound. \begin{theorem}\label{the:bound}
Let $G$ be a simple, undirected and connected graph of order $n$ with maximum degree $\Delta$. Let $M_G(s)$ be its deformed Laplacian matrix. Then,
	\begin{align*}
		\rho(M_{G}(s))\leq 1+s^2(\Delta-1)+|s|\rho(A(G)).
	\end{align*}
\end{theorem}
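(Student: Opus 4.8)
The plan is to combine the Rayleigh quotient characterization \eqref{eq:ray} with the additive decomposition
\[
M_G(s) = I + s^2(D-I) - sA,
\]
bounding the quadratic form of each summand separately. First I would fix a unit vector $x \in \mathbb{R}^n$ and, using $\langle x,x\rangle = 1$, write
\[
\langle x, M_G(s)x\rangle = 1 + s^2\langle x, (D-I)x\rangle - s\langle x, Ax\rangle .
\]
The diagonal matrix $D-I$ has entries $d_{v_i}-1$; since $G$ is connected of order $n\geq 2$, every vertex has degree at least $1$, so $D-I$ is nonnegative and
\[
0 \leq \langle x, (D-I)x\rangle = \sum_{i} (d_{v_i}-1)\,x_i^2 \leq \Delta - 1 .
\]
For the adjacency term, since $A$ is symmetric, $|\langle x, Ax\rangle| \leq \|A\|_2 = \rho(A)$.

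Next I would bound $|\langle x, M_G(s)x\rangle|$ in both directions. The upper tail is immediate: $\langle x, M_G(s)x\rangle \leq 1 + s^2(\Delta-1) + |s|\rho(A)$. For the lower tail, which controls $|\lambda_{\min}|$, I would use the nonnegativity $\langle x, (D-I)x\rangle \geq 0$ together with $s\langle x, Ax\rangle \leq |s|\rho(A)$ to obtain
\[
-\langle x, M_G(s)x\rangle = -1 - s^2\langle x, (D-I)x\rangle + s\langle x, Ax\rangle \leq -1 + |s|\rho(A) \leq 1 + s^2(\Delta-1) + |s|\rho(A).
\]
Hence $|\langle x, M_G(s)x\rangle| \leq 1 + s^2(\Delta-1) + |s|\rho(A)$ for every unit vector $x$, and since $M_G(s)$ is symmetric we have $\rho(M_G(s)) = \sup_{|x|=1}|\langle x, M_G(s)x\rangle|$, which yields the claimed bound.

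The main subtlety to watch is that $M_G(s)$ need not be positive semidefinite (it is only for certain ranges of $s$), so one cannot simply identify $\rho(M_G(s))$ with the supremum of the Rayleigh quotient: the spectral radius equals $\max(\lambda_{\max}, -\lambda_{\min})$, and the negative tail must be estimated as well. This is precisely where connectedness is used, since it guarantees $D-I\geq 0$ and thereby keeps the quadratic form of the diagonal part nonnegative, making the $-\lambda_{\min}$ estimate go through. Equivalently, and perhaps more cleanly, one can package the argument through the subadditivity of the operator norm: $\|I + s^2(D-I)\|_2 = 1 + s^2(\Delta-1)$ because all of its diagonal entries are nonnegative (connectedness again) and maximized at the vertex of degree $\Delta$, while $\|sA\|_2 = |s|\rho(A)$; the triangle inequality $\rho(M_G(s)) = \|M_G(s)\|_2 \leq \|I + s^2(D-I)\|_2 + \|sA\|_2$ then gives the bound directly.
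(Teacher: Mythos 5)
Your proposal is correct and, in its final operator-norm form, is exactly the paper's proof: the paper bounds $\rho(M_{G}(s))\leq \sup_{|x|=1}|(I+s^2(D-I))\,x|+|s|\sup_{|x|=1}|A\,x| = 1+s^2(\Delta-1)+|s|\rho(A(G))$, i.e., the triangle inequality applied to the same decomposition $M_G(s)=\bigl(I+s^2(D-I)\bigr)-sA$, with connectedness ensuring the diagonal entries $1+s^2(d_{v_i}-1)$ are nonnegative and maximized at $\Delta$. Your preliminary Rayleigh-quotient version with the separate negative-tail estimate of $-\lambda_{\min}$ is just the quadratic-form unpacking of that same norm bound, and your caution is well placed, since \eqref{eq:ray} as stated gives $\lambda_{\max}$ rather than $\rho$ for matrices that are not positive semidefinite.
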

\begin{proof} Recall that $M_{G}(s)=I-sA+s^2(D-I)$. Thus,
\begin{align*}
	\rho(M_{G}(s))\leq \sup_{|x|=1}\left\{|M_{G}(s)\,x|\right\} &= \sup_{|x|=1} |(I-sA+s^2(D-I))\,x| \\
	&\leq \sup_{|x|=1} |(I+s^2(D-I))\,x|+\sup_{|x|=1}|s||A\,x| \\
	&= 1+s^2(\Delta-1) +|s|\rho(A(G)),
\end{align*}
since $I+s^2(D-I)$ is a diagonal matrix and $\sup\limits_{|x|=1}|A\,x| = \rho(A(G))$, being $A$ a symmetric matrix.

\end{proof}

\section{Spectrum of deformed Laplacian associated to a tree}\label{sec:Spectrum of deformed Laplacian associated to a tree}

We start this section, with a result for the spectrum of the deformed Laplacian associated to a bipartite graph, which contains trees as a particular subcase.

\begin{theorem}\label{th:bip}
	If $G$ is a bipartite graph, then $\sigma(M_{G}(s)) =\sigma(M_{G}(-s))$.
\end{theorem}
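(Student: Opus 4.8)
The plan is to exploit the classical fact that for a bipartite graph the adjacency matrix $A$ is similar to $-A$ via a diagonal sign matrix, and to check that this same similarity carries $M_G(s)$ to $M_G(-s)$. Concretely, since $G$ is bipartite, its vertex set partitions as $V(G)=X\cup Y$ so that every edge joins a vertex of $X$ to a vertex of $Y$. I would define the diagonal matrix $S=\operatorname{diag}(\varepsilon_1,\ldots,\varepsilon_n)$ by setting $\varepsilon_i=+1$ if $v_i\in X$ and $\varepsilon_i=-1$ if $v_i\in Y$. Then $S$ is symmetric and orthogonal, so $S^{-1}=S=S^{\top}$, and conjugation by $S$ is a similarity transformation that preserves the spectrum.

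The key computation is to show $S\,M_G(s)\,S = M_G(-s)$. Writing $M_G(s)=I+s^2(D-I)-sA$, I would observe that $S$ commutes with both $I$ and the diagonal matrix $D-I$, since diagonal matrices commute, so $S\bigl(I+s^2(D-I)\bigr)S = I+s^2(D-I)$. The only nontrivial piece is the adjacency term: I would verify that $S\,A\,S = -A$. Entrywise, $(SAS)_{ij}=\varepsilon_i\,a_{ij}\,\varepsilon_j$; whenever $a_{ij}\neq 0$ the vertices $v_i,v_j$ are adjacent and hence lie in opposite parts, forcing $\varepsilon_i\varepsilon_j=-1$, so $(SAS)_{ij}=-a_{ij}$. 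This gives $SAS=-A$, whence
\begin{align*}
S\,M_G(s)\,S &= I+s^2(D-I) - s\,(S A S) \\
&= I+s^2(D-I) + sA \\
&= I+(-s)^2(D-I) - (-s)A = M_G(-s).
\end{align*}

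Because $M_G(s)$ and $M_G(-s)$ are similar via the orthogonal matrix $S$, they have the same characteristic polynomial and therefore the same multiset of eigenvalues, i.e.\ $\sigma(M_G(s))=\sigma(M_G(-s))$, which is exactly the claim.

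I do not anticipate a serious obstacle here; the proof is essentially a two-line verification once the sign matrix $S$ is introduced. The only point requiring a little care is making the bipartite structure explicit and justifying $SAS=-A$ at the level of matrix entries, using that nonzero adjacency entries correspond precisely to edges crossing the bipartition. Everything else reduces to the commutativity of diagonal matrices and the spectral invariance under similarity, both of which are standard. One could equivalently phrase the argument in terms of eigenvectors — if $M_G(s)x=\lambda x$ then $M_G(-s)(Sx)=\lambda(Sx)$ — but the congruence/similarity formulation is cleaner and makes the equality of spectra with multiplicities immediate.
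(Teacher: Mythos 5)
Your proof is correct and follows essentially the same route as the paper: the authors also define a diagonal $\pm 1$ sign matrix (their $U$, your $S$) based on the bipartition, verify $UAU^{-1}=-A$, and conclude that $M_G(s)$ and $M_G(-s)$ are similar and hence cospectral. Your entrywise justification of $SAS=-A$ is if anything slightly more explicit than the paper's, but there is no substantive difference in approach.
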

\begin{proof} For $G=(V,E)$, let $V=V_1 \cup V_2$ the bipartition of $V$. We write
	$$A=A(G)= \begin{pmatrix}
		0 &  & B \\
		&  &  \\
		B^T &  & 0 \\
	\end{pmatrix}$$
	Let $U$ be the diagonal matrix defined as follows:
	\begin{align*}
		u_{i,i}=\left\{\begin{array}{lr}
			1 & \ \text{ if } \ v_i \in V_1 \\ \\
			-1 & \ \text{ if } \ v_i \in V_2.
		\end{array}\right.
	\end{align*}\ \\
It is easy to see that $U= U^{-1}$ and also $UAU^{-1}=-A$. Let us now write
\begin{align*}
M_1=M_{G}(s)=I+s^2(D-I)-sA \qquad \text{and} \qquad M_2=M_{G}(-s)=I+s^2(D-I)+sA.
\end{align*}
Then,
	\begin{align*}
		UM_{1}U^{-1} &= U(I+s^2(D-I)-sA)U^{-1} \\
		&=UU^{-1}+s^{2}UU^{-1}(D-I)-sUAU^{-1}\\
		&=I+s^2(D-I)-s(-A)=M_2.
	\end{align*}
It follows then that $M_1$ and $M_2$ are similar matrices, and therefore have the same spectrum.
\end{proof}

\begin{remark}
{\rm It is well known in the literature that when $G$ is a bipartite graph, the Laplacian matrix $L(G)$ and the signless Laplacian matrix $Q(G)$ are cospectral. This can be seen as a particular case of Theorem~\ref{th:bip} as follows:
\begin{align*}
	\sigma(Q(G))=\sigma(M_{G}(1))=\sigma(M_{G}(-1))=\sigma(L(G)).
\end{align*}}
\end{remark}

We now give some results about the spectrum of the deformed Laplacian matrix $M_T(s)$ underlying a tree $T$.  In particular, the Proposition~\ref{prop:properties for trees} 5) strengths the result in Theorem~\ref{thm:main_decreasing_rho}, for general graphs, showing that for every value of $s$ the spectral radius of sub trees does not increase.

\begin{proposition}\label{prop:properties for trees}
Let $T\neq K_1$ be a tree and let $M_T(s)$ be its deformed Laplacian matrix. Then
\begin{itemize}
    \item[1)] $0\in Spect(M_T(s))$ if and only if $s=\pm 1$;
    \item[2)] $M_T(s)$ is positive definite if and only if $|s|<1$;
    \item[3)] $\rho(M_T(s))>1$;
    \item[4)] If $T$ has a pendant $P_2$ then $\rho(M_T(s))>1+s^2$;
    \item[5)] Let $v$ be a pendant vertex of a tree $T$. If $T'=T\setminus v$ then $\rho(M_{T'}(s))<\rho(M_{T}(s))$. In particular, if $T'$ is a subtree of $T$, then $\rho(M_{T'}(s))<\rho(M_{T}(s))$;
    \item[6)] If $\Delta(T)\geq 4$, then $\rho(M_{T}(s))>1+2|s|+s^2$ (analogously, $\rho(M_{T}(s))>1+\sqrt{3}|s|+s^2$ if $\Delta(T)\geq 3$);
    \item[7)] If $T$ is a starlike tree $T:=[q_1,\ldots,q_k]$ then $\rho(M_{T}(s))\leq 1+s^2(\Delta(T)-1) +|s|\frac{k}{\sqrt{k-1}}$.
\end{itemize}
\end{proposition}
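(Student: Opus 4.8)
The backbone of the whole argument is the determinant identity $\det M_T(s) = 1 - s^2$, valid for every tree $T$ with at least two vertices. I would prove it by induction on $n = |V(T)|$: the base case $T = P_2$ is a direct $2\times 2$ computation, and for the inductive step I delete a leaf $v$ adjacent to $u$. Since the diagonal entry at a leaf is $1 + s^2(1-1) = 1$, the Schur complement with respect to $v$ reduces the $(u,u)$ entry by exactly $s^2$, which is precisely the diagonal entry $u$ acquires in $M_{T\setminus v}(s)$ when its degree drops by one; hence $\det M_T(s) = \det M_{T\setminus v}(s) = 1 - s^2$. Item $(1)$ is then immediate, as $0$ is an eigenvalue iff the determinant vanishes iff $s = \pm 1$. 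For item $(2)$, positive definiteness forces $\det M_T(s) = 1 - s^2 > 0$, i.e. $|s| < 1$ (at $|s|=1$ the matrix is singular, and at $|s|>1$ the negative determinant forces a negative eigenvalue); conversely, for $|s|<1$ the eigenvalues are continuous in $s$, all equal to $1$ at $s=0$, and never reach $0$ on the connected interval $(-1,1)$ because $\det M_T(s)\neq 0$ there, so they remain positive.

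Items $(3)$ and $(4)$ follow from Cauchy interlacing on a $2\times 2$ principal submatrix together with $\rho(M_T(s)) \ge \lambda_{\max}(M_T(s))$. For $(3)$, the block indexed by the endpoints of any edge is $\left(\begin{smallmatrix} a & -s \\ -s & b\end{smallmatrix}\right)$ with $a,b\ge 1$, whose top eigenvalue is $\frac{a+b}{2} + \sqrt{(\frac{a-b}{2})^2 + s^2} \ge 1 + |s| > 1$ for $s\neq 0$. For $(4)$, the block indexed by a pendant $P_2$ (a leaf and its degree-two neighbour) is $\left(\begin{smallmatrix} 1 & -s \\ -s & 1+s^2\end{smallmatrix}\right)$, and a short estimate gives top eigenvalue $> 1 + s^2$.

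The remaining items rest on the identity $\rho(M_T(s)) = \lambda_{\max}(M_T(s))$, which I would establish thus: for $s\le 0$ all entries of $M_T(s)$ are nonnegative and the matrix is irreducible (because $T$ is connected), so by Perron--Frobenius its spectral radius equals its largest eigenvalue; for $s>0$ the bipartite symmetry $\sigma(M_T(s)) = \sigma(M_T(-s))$ of Theorem~\ref{th:bip} transfers the conclusion. For item $(5)$, deleting a leaf $v$ with neighbour $u$ realizes $M_{T\setminus v}(s)$ as $\hat M - s^2 e_u e_u^{T}$, where $\hat M$ is the principal submatrix of $M_T(s)$ on $V\setminus\{v\}$; thus $\lambda_{\max}(M_{T\setminus v}(s)) \le \lambda_{\max}(\hat M)$ by Weyl, while the strict decrease of the Perron root under passage to a proper principal submatrix of an irreducible nonnegative matrix gives $\lambda_{\max}(\hat M) < \lambda_{\max}(M_T(s))$ (proved for $s\le 0$, then for $s>0$ by bipartite symmetry). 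Iterating leaf deletions yields the subtree statement. Item $(6)$ combines $(5)$ with the explicit star spectrum~\eqref{I1}: since $\Delta(T)\ge 4$ forces $K_{1,4}$ as a subtree, $\rho(M_T(s)) > \rho(M_{K_{1,4}}(s))$, and a direct comparison shows $\rho(M_{K_{1,4}}(s)) \ge 1 + 2|s| + s^2$; the $\Delta(T)\ge 3$ variant uses $K_{1,3}$ identically.

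Finally, for item $(7)$ I would invoke Theorem~\ref{the:bound}, reducing the claim to $\rho(A(T)) \le \frac{k}{\sqrt{k-1}}$ for a starlike tree with $k$ legs. If $\rho(A(T)) < 2$ this is automatic, since $\frac{k}{\sqrt{k-1}} \ge 2$. Otherwise, writing $\rho(A(T)) = 2\cosh\theta = w + w^{-1}$ with $w = e^{\theta} > 1$ and taking the positive Perron eigenvector, the values along a leg of length $q$ are proportional to $\sinh((q+1-t)\theta)$, so the ratio of the first leg-vertex to the centre is $r = \frac{\sinh(q\theta)}{\sinh((q+1)\theta)} < w^{-1}$. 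The centre equation gives $\rho(A(T)) = \sum_{j} r_j < k\, w^{-1}$, whence $w^2 < k-1$ and, since $w \mapsto w + w^{-1}$ is increasing on $(1,\infty)$, $\rho(A(T)) < \sqrt{k-1} + \frac{1}{\sqrt{k-1}} = \frac{k}{\sqrt{k-1}}$. I expect this last spectral-radius estimate for spiders to be the main obstacle: the transfer bookkeeping along the legs and the separate handling of the regime $\rho(A(T)) < 2$ are where the genuine work lies, whereas $(1)$--$(6)$ reduce cleanly to the determinant identity, interlacing, and Perron--Frobenius.
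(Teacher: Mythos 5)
Your proposal is correct, but it takes a genuinely different route from the paper, whose proof of items 1)--6) consists entirely of running the Diagonalize algorithm of \cite{JT2011} bottom-up on $T$ and reading off inertia via Theorem~\ref{thm:inertia}: item 1) comes from the run at $x=0$ producing the diagonal $(1,\ldots,1,1-s^2)$, item 2) from all outputs being positive when $|s|<1$, items 3)--5) from sign analysis of the outputs at $x=-\lambda$ with $\lambda$ the relevant spectral radius (for 5) the paper compares root values, $a_T(u)=a_{T'}(u)+s^2+\frac{s^2}{\lambda-1}>0$), and 6) from solving the root equation for the star with four leaves. Your determinant identity $\det M_T(s)=1-s^2$ is in substance the same elimination --- the algorithm's leaf-to-root updates are exactly your Schur complements, and the product of its diagonal outputs at $x=0$ is your determinant --- so 1)--2) differ mainly in packaging, with a continuity argument replacing the inertia count. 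For 3)--4) you substitute Cauchy interlacing on $2\times 2$ principal blocks, and for 5) Weyl's inequality plus strict monotonicity of the Perron root on proper principal submatrices of an irreducible nonnegative matrix, transported to $s>0$ by the sign-flip of Theorem~\ref{th:bip}. This buys something real: your explicit proof that $\rho(M_T(s))=\lambda_{\max}(M_T(s))$ (Perron--Frobenius for $s\le 0$, bipartite symmetry for $s>0$) is a fact the paper's proof of 5) uses tacitly when it assumes the algorithm output vanishes at the root for $\lambda=\rho(M_{T'}(s))$, i.e.\ that the spectral radius is an attained largest eigenvalue; you fill that in. For 7) the paper simply cites \cite{starlike} for $\rho(A(T))\le \frac{k}{\sqrt{k-1}}$ and applies Theorem~\ref{the:bound}, whereas you reprove the adjacency bound via the $\sinh$-transfer along the legs, which is self-contained at the cost of genuine extra work. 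Three small patches are needed: your case split in 7) misses the boundary value $\rho(A(T))=2$ exactly (harmless, since $k\ge 3$ gives $\frac{k}{\sqrt{k-1}}>2$); in 6), when $T=K_{1,4}$ itself the strict subtree inequality from 5) is unavailable, so you must use the strict form of your star estimate, which does hold since $\sqrt{9s^2+16}>4-|s|$ for $s\ne 0$; and, exactly as in the paper, the strict inequalities in 3), 4) and 6) implicitly assume $s\ne 0$, since $M_T(0)=I$ has $\rho=1$.
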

\begin{proof}
\begin{enumerate}
    \item  We perform the algorithm $Diag(M_T(s),0)$. Initializing $T$ with the corresponding values we see that in each pendant star with central vertex $u$ we obtain the value $1+s^2(\deg(u)-1)$, $-s$ in the edges and $1$ in each leaf. The first stage updates the value in $u$ as \[ 1+s^2(\deg(u)-1)-(\deg(u)-1)\frac{(-s)^2}{1}=1. \]
    At the last vertex (the root) we obtain \[ 1+s^2\deg(u)-(\deg(u)-1)\frac{(-s)^2}{1}=1-s^2. \] Thus, $0$ is an eigenvalue of $M_T(s)$ if and only if $s=\pm 1$ and all the other eigenvalues of the deformed Laplacian are greaterr than $0$.
    \item  Observe that if $|s|<1$, then all output values of the algorithm are positive, which means that all eigenvalues are greater than $0$.
    \item  Consider $\lambda=\rho(M_{T}(s))$. When performing the algorithm $Diag(M_T(s),-\lambda)$, each leaf is initialized with $1-\lambda\leq 0$ because we are using the spectral radius and by Theorem~\ref{thm:inertia} all the values must be negative. Moreover, if $1-\lambda=0$ and since $T\neq K_1$, then that leaf should receive the value $2$ and the next vertex receives $\frac{-s^2}{2}$. However, a positive output means that there is an eigenvalue greater than $\lambda$, which is absurd since $\lambda$ is the spectral radius. Thus $1-\lambda<0$ or $\lambda>1$.
    \item  Consider $T=T'\oplus P_2$ (a tree with a pendant $p_2$ and $\lambda=\rho(M_{T}(s))$. When performing the algorithm $Diag(M_T(s),-\lambda)$, each leaf is initialized with $1-\lambda<0$, as we already proved. The next vertex in $P_2$ receives the value \[ 1+s^2(2-1)-\lambda-\frac{s^2}{1-\lambda}<0, \] otherwise we reach a contradiction. Since $1-\lambda<0$, we obtain \[ 1+s^2-\lambda<\frac{s^2}{1-\lambda}<0, \] thus $\rho(M_{T}(s))>1+s^2$.
    \item  Consider $T'=T\setminus v$, where $T'$ has the joining vertex $u$ as the root of $T'$. We know that for $\lambda=\rho(M_{T'}(s))$ we obtain \[ a_{T'}(u)=1+s^2(\deg_{T'}(u)-1)-\lambda-\xi=0 \] where $\xi$ is the resulting value obtained by processing all the children of $u$ in $T'$. On the other hand, \[ a_T(u)=1+s^2(\deg_T(u)-1)-\lambda-\xi-\frac{s^2}{1-\lambda} = a_{T'}(u)+s^2+\frac{s^2}{\lambda-1}>0, \] thus $\rho(M_{T}(s))>\rho(M_{T'}(s))$.
    \item  If $\Delta(T)\geq 4$, then $S_4$ is a subtree of $T$ and therefore $\rho(M_{S_4}(s))<\rho(M_{T}(s))$. Let $u$ be the root of $S_4$ and $\lambda=\rho(M_{S_4}(s))$. Then \[a_{S_4}(u)=1+s^2(4-1)-\lambda-\frac{4s^2}{1-\lambda}=0. \] Solving this equation for $\lambda$ we get one positive root, given by \[ \lambda=\frac{3}{2}s^2+1+\frac{1}{2}\sqrt{9s^4+16s^2}>1+s^2+\frac{1}{2}4|s|\sqrt{\frac{9}{16}s^2+1}>1+s^2+2|s|. \]
    \item If $T$ is a starlike tree $T:=[q_1,\ldots, q_k]$, then we have from \cite{starlike} that \[ \sqrt{k} \leq \rho_A(T)\leq \frac{k}{\sqrt{k-1}}. \] The inequality follows from Theorem~\ref{the:bound}.
\end{enumerate}
\end{proof}

\begin{example}\label{ex:path}
Consider the tree $T=P_n$ and its deformed Laplacian matrix. Assume that $\lambda \geq \rho(M_{P_n}(s))$.
\begin{figure}[H]
  \centering
  \includegraphics[width=11cm]{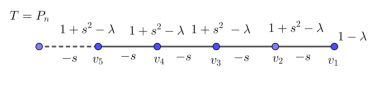}
  \caption{Graph $P_{n}$.}\label{fig:weighted_path_diag}
\end{figure}
The output of the Eigenvalue Location Algorithm ($\mbox{Diag}(M_{P_n}(s), -\lambda)$) at each vertex of a path $P_n$ (see Figure~\ref{fig:weighted_path_diag}) gives a sequence $Z_1, Z_2, \ldots, Z_n$ defined by the recurrence relation \begin{align*}
    Z_1 & =1-\lambda \\
    Z_j & =\varphi(Z_{j-1}), \text{ for } 2 \leq j \leq n-1,
\end{align*}
where $\varphi(t)=1+s^2-\lambda -\frac{s^2}{t}$, for $t\neq 0$.  Although, the last value processed in $v_n$ is different because it has degree 1,
$$Z_{n} =  1-\lambda - \frac{s^2}{Z_{n-1}}.$$
This equation provides an explicit formula for the spectral radius of the deformed Laplacian matrix associated to a path (and for starlike graphs) since $Z_{j}$ can be explicitly computed from \cite[Theorem 1]{OliveTrevAppDiff} and the equation $Z_{n} = 0$ will be a simple rational expression on $\lambda = \rho(M_{P_n}(s))$ (the denominator is always non zero because all the previous $Z_j$ must be negative).\\
The fixed points of $\varphi$, that is, the solutions of $\varphi(t)=t$ are the solutions $\theta(s)$ and $\theta'(s)$ of the quadratic equation
\begin{equation}
    t^2-(1+s^2-\lambda)t+s^2=0.
\end{equation}
The most interesting case is when this equation has two distinct solutions, which happens when \begin{equation}\label{ineq}
    \Delta(s):=(1+s^2-\lambda)^2-4s^2>0
\end{equation} or, equivalently, $1-\sqrt{\lambda}<s<-1+\sqrt{\lambda}$. The solutions, in this case, are given by
\begin{align}
     \theta(s) & :=\frac{(1+s^2-\lambda)-\sqrt{(1+s^2-\lambda)^2-4s^2}}{2} \quad \text{ and } \\
    \theta'(s) & :=\frac{(1+s^2-\lambda)+\sqrt{(1+s^2-\lambda)^2-4s^2}}{2}.
\end{align}
It is obvious that $Z_1=1-\lambda <0$ if $\lambda \geq \rho(M_{P_n}(s))$ because $\rho(M_{P_n}(s))>1$, from Proposition~\ref{prop:properties for trees}, (3). In this case, $Z_1<\theta$.
Indeed, we have $1-\lambda<\theta$ if and only if
    \[ 2-2\lambda < 1+s^2-\lambda-\sqrt{(1+s^2-\lambda)^2-4s^2} \]
    if and only if
\[ \sqrt{(1+s^2-\lambda)^2-4s^2}<-1+s^2+\lambda,\] which happens if and only if $-2\lambda s^2<2\lambda s^2$, which is true since $\lambda>1$.
Recall that, from Definition~\ref{def:adapted} in both sub-Laplacian or super-Laplacian cases, given a real number $\lambda>1$, if $s$ is adapted to $\lambda$ then $\lambda>(1+|s|)^2=1+ 2|s| +|s|^2$ so $1-\sqrt{\lambda}<s<-1+\sqrt{\lambda}$.  This fact, combined with our estimates of the spectral radius of trees with minimum greater than 4 (Proposition~\ref{prop:properties for trees}, (6)), allows us to understand the behaviour of the deformed Laplacian eigenvalues associated to paths. More specifically, we observe that all $Z_j$ are smaller than $\theta(s)$ because $Z_j \to \theta(s)$, increasingly, when $n \to \infty$ (see \cite[Section 3.3]{OliveTrevAppDiff} for details). This result can be used in several applications as consensus protocol or asymptotic behaviour of networks (that is, when we increase the number of vertices).
\end{example}

\section{Spectrum of the deformed Laplacian associated to an \emph{H-join} graph}\label{sec:Spectrum of the deformed Laplacian associated to a H-join graph}
Let $H$ be a graph of order $r$ and let $G=\bigvee\limits_{H}{\{G_{i}:1\leq i\leq r\}}$ be the \emph{H-join} graph. In this section, we consider the vertices of $G$ with the labels $1,\ldots ,\sum_{i=1}^{r}n_{i}$, starting with the vertices of $G_{1}$, continuing with the vertices of $G_{2},G_{3},\ldots ,G_{r-1}$ and finally with the vertices of $G_{r}.$
\begin{example}\label{ExDOT}
Let $H=P_{3}$, $G_{1}=C_{4}$, $G_{2}=P_{2}$, $G_{3}=C_{6}$. The graph $G=\bigvee\limits_{H}\{G_{1},G_{2},G_{3}\}$ is depicted in Figure 4, with vertex labels as described.
\begin{eqnarray*}
& \definecolor{qqqqff}{rgb}{0.,0.,1.}
\definecolor{cqcqcq}{rgb}{0.752941176471,0.752941176471,0.752941176471}
\begin{tikzpicture}
\draw [line width=1.2pt] (7.,1.)-- (7.,-1.);
\draw [line width=1.2pt] (7.,-1.)-- (5.5,-2.);
\draw [line width=1.2pt] (5.5,-2.)-- (4.,-1.);
\draw [line width=1.2pt] (4.,-1.)-- (4.,1.);
\draw [line width=1.2pt] (2.,1.5)-- (2.,-1.5);
\draw [line width=1.2pt] (-1.5,2.)-- (-3.,0.);
\draw [line width=1.2pt] (-3.,0.)-- (-1.5,-2.);
\draw [line width=1.2pt] (0.,0.)-- (-1.5,-2.);
\draw [line width=1.2pt] (-1.5,2.)-- (0.,0.);
\draw [line width=1.2pt] (7.,1.)-- (5.5,2.);
\draw [line width=1.2pt] (5.5,2.)-- (4.,1.);
\draw [line width=0.4pt] (2.,1.5)-- (4.,1.);
\draw [line width=0.4pt] (2.,1.5)-- (5.5,2.);
\draw [line width=0.4pt] (2.,1.5)-- (7.,1.);
\draw [line width=0.4pt] (2.,1.5)-- (4.,-1.);
\draw [line width=0.4pt] (2.,1.5)-- (5.5,-2.);
\draw [line width=0.4pt] (2.,1.5)-- (7.,-1.);
\draw [line width=0.4pt] (2.,-1.5)-- (4.,1.);
\draw [line width=0.4pt] (2.,-1.5)-- (5.5,2.);
\draw [line width=0.4pt] (2.,-1.5)-- (7.,1.);
\draw [line width=0.4pt] (2.,-1.5)-- (5.5,-2.);
\draw [line width=0.4pt] (2.,-1.5)-- (4.,-1.);
\draw [line width=0.4pt] (2.,-1.5)-- (7.,-1.);
\draw [line width=0.4pt] (0.,0.)-- (2.,1.5);
\draw [line width=0.4pt] (0.,0.)-- (2.,-1.5);
\draw [line width=0.4pt] (-1.5,-2.)-- (2.,-1.5);
\draw [line width=0.4pt] (-1.5,-2.)-- (2.,1.5);
\draw [line width=0.4pt] (-3.,0.)-- (2.,1.5);
\draw [line width=0.4pt] (-3.,0.)-- (2.,-1.5);
\draw [line width=0.4pt] (-1.5,2.)-- (2.,1.5);
\draw [line width=0.4pt] (-1.5,2.)-- (2.,-1.5);
\begin{scriptsize}
\draw [fill=qqqqff] (4.,1.) circle (2.0pt);
\draw[color=qqqqff] (4.2,0.9) node {$8$};
\draw [fill=qqqqff] (7.,-1.) circle (2.0pt);
\draw[color=qqqqff] (7.3,-0.9) node {$11$};
\draw [fill=qqqqff] (4.,-1.) circle (2.0pt);
\draw[color=qqqqff] (4.2,-0.9) node {$9$};
\draw [fill=qqqqff] (5.5,-2.) circle (2.0pt);
\draw[color=qqqqff] (5.5,-2.3) node {$10$};
\draw [fill=qqqqff] (2.,1.5) circle (2.0pt);
\draw[color=qqqqff] (2.,1.8) node {$5$};
\draw [fill=qqqqff] (2.,-1.5) circle (2.0pt);
\draw[color=qqqqff] (2.,-1.8) node {$6$};
\draw [fill=qqqqff] (-1.5,2.) circle (2.0pt);
\draw[color=qqqqff] (-1.5,2.3) node {$1$};
\draw [fill=qqqqff] (-3.,0.) circle (2.0pt);
\draw[color=qqqqff] (-3.3,0.) node {$2$};
\draw [fill=qqqqff] (0.,0.) circle (2.0pt);
\draw[color=qqqqff] (-0.3,0.) node {$4$};
\draw [fill=qqqqff] (-1.5,-2.) circle (2.0pt);
\draw[color=qqqqff] (-1.5,-2.3) node {$3$};
\draw [fill=qqqqff] (5.5,2.) circle (2.0pt);
\draw[color=qqqqff] (5.5,2.3) node {$7$};
\draw [fill=qqqqff] (7.,1.) circle (2.0pt);
\draw[color=qqqqff] (7.3,0.9) node {$12$};
\end{scriptsize}
\end{tikzpicture}&\\
&\text{{\bf Figure 4}. The graph $G=\bigvee\limits_{P_3}\{C_{4}, P_{2}, C_{6}\}$.}&
\end{eqnarray*}
\end{example}

With the above mentioned labeling, let
\begin{equation}
M\left( G\right) = \left[
\begin{array}{cccc}
M_{1} & \delta _{12}\mathbf{1}_{n_{1}}\mathbf{1}_{n_{2}}^{T} & \ldots  &
\delta _{1r}\mathbf{1}_{n_{1}}\mathbf{1}_{n_{r}}^{T} \\
\delta _{12}\mathbf{1}_{n_{2}}\mathbf{1}_{n_{1}}^{T} & M_{2} & \ddots  &
\vdots  \\
\vdots  & \ddots  & \ddots  & \delta _{\left( r-1\right) r}\mathbf{1}_{n_{r-1}}\mathbf{1}_{n_{r}}^{T} \\
\delta _{1r}\mathbf{1}_{n_{r}}\mathbf{1}_{n_{1}}^{T} & \ldots  & \delta_{\left( r-1\right) r}\mathbf{1}_{n_{r}}\mathbf{1}_{n_{r-1}}^{T} & M_{r}%
\end{array}%
\right],   \label{R}
\end{equation}%
where the diagonal blocks $M_{i}$ are symmetric matrices such that
\begin{equation*}
M_{i}\mathbf{1}_{n_{i}}=\mu _{i}\mathbf{1}_{n_{i}}
\end{equation*}
for $i=1,\ldots ,r$ and $\delta_{i,j}$ are scalars for $1 \leq i < j\leq r $. Let
\begin{align}
F_{r}=\left[\begin{array}{ccccc}
      \mu_{1}                           & \delta_{12}\sqrt{n_{1}n_{2}}      & \ldots & \delta_{1(r-1)}\sqrt{n_{1}n_{r-1}}           & \delta_{1r}\sqrt{n_{1}n_{r}} \\
      \delta_{12}\sqrt{n_{1}n_{2}}      & \mu _{2}                          & \ldots & \delta_{2(r-1)}\sqrt{n_{2}n_{r-1}}           & \delta_{2r}\sqrt{n_{2}n_{r}} \\
      \vdots                            & \vdots                            & \ddots & \vdots                                       & \vdots \\
      \delta_{1(r-1)}\sqrt{n_{2}n_{r-1}}& \delta_{2(r-1)}\sqrt{n_{2}n_{r-1}}& \ldots & \mu_{r-1}                                    & \delta_{\left(r-1\right)r}\sqrt{n_{r-1}n_{r}}\\
      \delta_{1r}\sqrt{n_{1}n_{r}}      & \delta_{2r}\sqrt{n_{2}n_{r}}      & \ldots & \delta_{\left(r-1\right)r}\sqrt{n_{r-1}n_{r}}& \mu_{r}
      \end{array}\right].  \label{C}
\end{align}\ \\

The authors in \cite{CDR} obtain the spectrum of $M(G)$, when $G=\bigvee\limits_{H}{\{G_{i}:1\leq i\leq r\}}$ as follows:

\begin{theorem}\label{H-Join_distance_eigenvalues}
Consider the matrix $M(G)$ \eqref{R} and the matrix $F_r$ \eqref{C}. Then the spectrum of $M\left(G\right)$ is
\begin{equation*}
\sigma(M(G))=\bigcup _{i=1}^{r}\big(\sigma(M_{i})-\{\mu _{i}\}\big) \cup \sigma(F_{r}).
\end{equation*}
\end{theorem}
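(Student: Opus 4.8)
The plan is to treat this as an orthogonal-invariant-subspace decomposition, exploiting that $M(G)$ is real symmetric (the diagonal blocks $M_i$ are symmetric and the block in position $(j,i)$ is the transpose of the one in position $(i,j)$, since $\delta_{ij}=\delta_{ji}$) together with the rank-one structure of the off-diagonal blocks. First I would split $\mathbb{R}^n$ into two pieces. For each $i$, since $M_i$ is symmetric and $\mathbf{1}_{n_i}$ is an eigenvector, $M_i$ leaves the hyperplane $\mathbf{1}_{n_i}^{\perp}\subset\mathbb{R}^{n_i}$ invariant, and its spectrum there is exactly $\sigma(M_i)-\{\mu_i\}$ (one copy of $\mu_i$, the one carried by $\mathbf{1}_{n_i}$, removed). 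Let $W^{\perp}$ be the span of all embeddings $\widehat{w}$ (zero outside block $i$) of such vectors $w\in\mathbf{1}_{n_i}^{\perp}$, over all $i$; this has dimension $\sum_i (n_i-1)=n-r$.

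The key computation is that each such $\widehat{w}$ is an eigenvector of $M(G)$. Indeed, block $i$ of $M(G)\widehat{w}$ is $M_i w$, while for $j\neq i$ block $j$ equals $\delta_{ij}\mathbf{1}_{n_j}(\mathbf{1}_{n_i}^{T}w)=0$ because $\mathbf{1}_{n_i}^{T}w=0$. Hence if $w$ is an eigenvector of $M_i$ on $\mathbf{1}_{n_i}^{\perp}$ with eigenvalue $\lambda$, then $M(G)\widehat{w}=\lambda\widehat{w}$. Running over all $i$ and a basis of each $\mathbf{1}_{n_i}^{\perp}$ shows that the restriction of $M(G)$ to $W^{\perp}$ contributes exactly $\bigcup_{i=1}^{r}\big(\sigma(M_i)-\{\mu_i\}\big)$.

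For the remaining $r$ dimensions I would set $u_i=\tfrac{1}{\sqrt{n_i}}\,\widehat{\mathbf{1}}_{n_i}$ (the normalized embedding of $\mathbf{1}_{n_i}$ in block $i$), an orthonormal family spanning a subspace $W$ with $W\perp W^{\perp}$. A direct block computation gives, for each $i$,
\begin{equation*}
M(G)\,u_i=\mu_i u_i+\sum_{j\neq i}\delta_{ij}\sqrt{n_i n_j}\,u_j,
\end{equation*}
so $W$ is $M(G)$-invariant and the matrix of $M(G)|_{W}$ in the orthonormal basis $\{u_1,\ldots,u_r\}$ is precisely $F_r$ from \eqref{C}; its contribution to the spectrum is $\sigma(F_r)$. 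Finally, since $W$ and $W^{\perp}$ are orthogonal $M(G)$-invariant subspaces with $\dim W+\dim W^{\perp}=r+(n-r)=n$, the spectral theorem for symmetric matrices lets me assemble the full spectrum as the (multiset) union of the two restricted spectra, yielding the claimed formula. The main point requiring care is the multiplicity bookkeeping: one must verify that passing to $\mathbf{1}_{n_i}^{\perp}$ removes exactly one copy of $\mu_i$ per block and that $W,W^{\perp}$ are genuinely complementary (dimensions add to $n$ and the two families of eigenvectors are mutually orthogonal), so that no eigenvalue is double-counted or lost; everything else is the routine block arithmetic sketched above.
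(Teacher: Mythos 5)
Your proof is correct and complete, and it is essentially the canonical argument: the paper itself does not prove this theorem but imports it from \cite{CDR}, where the spectrum is obtained by the same mechanism you describe, namely that vectors supported on block $i$ and orthogonal to $\mathbf{1}_{n_i}$ are annihilated by the rank-one off-diagonal blocks (yielding the $n-r$ eigenvalues $\bigcup_{i}\big(\sigma(M_i)-\{\mu_i\}\big)$), while the normalized block-indicator vectors $u_i$ span a complementary invariant subspace on which $M(G)$ acts, in an orthonormal basis, exactly as $F_r$. Your care with the bookkeeping is precisely what makes the multiset identity valid: symmetry of $M_i$ guarantees both that $\mathbf{1}_{n_i}^{\perp}$ is $M_i$-invariant with spectrum $\sigma(M_i)$ minus exactly one copy of $\mu_i$, and the dimension count $r+(n-r)=n$ together with orthogonality of the two invariant subspaces ensures no eigenvalue is lost or double-counted; nothing further is needed.
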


\subsection{The deformed Laplacian spectrum for H-join of graphs}\label{SDLspect}
In this section, we assume that $H$ is a connected graph of order $r$. From \eqref{R}, we have that the deformed Laplacian matrix of the \emph{H-join} $G=\bigvee\limits_{H}{\{G_{i}:1\leq i\leq r\}}$, where $G_i$
is a $d_{i}$-degree regular graph of order $n_{i}$, for each $i=1,\ldots,r$, \ is given as follows
\begin{align}\label{D}
M_{G}(s)=I-sA+s^{2}(D-I)=\left[
\begin{array}{cccc}
M_{1}(s) & \delta _{12}\mathbf{1}_{n_{1}}\mathbf{1}_{n_{2}}^{T} & \ldots  &
\delta _{1r}\mathbf{1}_{n_{1}}\mathbf{1}_{n_{r}}^{T} \\
\delta _{12}\mathbf{1}_{n_{2}}\mathbf{1}_{n_{1}}^{T} & M_{2}(s) & \ddots  &
\vdots  \\
\vdots  & \ddots  & \ddots  & \delta _{\left( r-1\right) r}\mathbf{1}%
_{n_{r-1}}\mathbf{1}_{n_{r}}^{T} \\
\delta _{1r}\mathbf{1}_{n_{r}}\mathbf{1}_{n_{1}}^{T} & \ldots  & \delta
_{\left( r-1\right) r}\mathbf{1}_{n_{r}}\mathbf{1}_{n_{r-1}}^{T} & M_{r}(s)
\end{array}%
\right],
\end{align}
and from (\ref{C})
\begin{equation}\label{FD}
F_{r}(s)=\left[
\begin{array}{cccc}
\lambda_{1}(M_{1}(s)) & \delta _{12}\sqrt{n_{1}n_{2}} & \ldots & \delta _{1r}\sqrt{n_{1}n_{r}} \\
\delta _{12}\sqrt{n_{1}n_{2}} & \lambda_{1}(M_{2}(s)) & \ddots & \vdots \\
\vdots & \ddots & \ddots & \delta _{\left( r-1\right) r}\sqrt{n_{r-1}n_{r}}
\\
\delta _{1r}\sqrt{n_{1}n_{r}} & \ldots & \delta _{\left( r-1\right) r}\sqrt{%
n_{r-1}n_{r}} & \lambda_{1}(M_{r}(s))%
\end{array}%
\right],
\end{equation}
where $\delta_{ij}=-s$ if $ij\in E(H)$ and $\delta_{ij}=0$ otherwise.
\begin{remark}\label{N1}
Let $N_{i}=\sum\limits_{j\in N_{H}(i)}n_{j}$, where $N_{H}(i)$ is the set of neighbors of vertice $i\in V(H)$. Taking into account \eqref{D}, we note that
\begin{align}\label{Mimatrix}
M_{i}(s)=\left(s^{2}(d_{i}+N_{i}-1)+1\right)I_{n_{i}}-sA(G_{i}),
\end{align}
where $I_{n_{i}}$ is the identity matrix of order $n_{i}$ and $A(G_{i})$ is the adjacency matrix of $G_{i}$, for each $i=1,\ldots,r$.
Further, as $G_i$ is a $d_{i}$-degree regular graph it follows that $M_{i}(s)\mathbf{1}_{n_{i}} = \lambda_{1}(M_{i}(s))\mathbf{1}_{n_{i}}$, where
the eigenvalues $\lambda_{1}(M_{i}(s))$ (in \eqref{FD}) are given by
\begin{align}\label{MaxEigMi}
\lambda_{1}(M_{i}(s))=s^{2}(d_{i}+N_{i}-1)-s d_{i}+1, \qquad i=1,\ldots,r.
\end{align}
\noindent
We conclude that the spectrum of $M_{i}(s)$ is completely determined by the spectrum of $A(G_{i})$, and is given as follows:
\begin{align}\label{SM}
\sigma(M_{i}(s))=\left\{s^{2}(d_{i}+N_{i}-1)-s\lambda_{k}(A(G_{i}))+1\right\}_{k=1}^{n_{i}} \ ,
\end{align}
with $\lambda_{1}(A(G_{i}))=d_{i}$, for each $i=1,\ldots,r$.
\end{remark}
Taking into account the Remark~\ref{N1}, by applying Theorem~\ref{H-Join_distance_eigenvalues}, we obtain our main result:
\begin{theorem}\label{TD}
Let $H$ be a connected graph of order $r$. If for each $i \in \{1, \ldots, r\}$, $G_i$ is a $d_{i}$-degree regular graph of order $n_{i}$, then the spectrum of $M_{G}(s)$,
where $G=\bigvee\limits_{H}{\{G_{i}:1\leq i\leq r\}}$, is given by
\begin{equation*}
\sigma(M_{G}(s))=\bigcup _{i=1}^{r}\bigg(\sigma(M_{i}(s))-\{\lambda_{1}(M_{i}(s))\}\bigg) \cup \sigma(F_{r}(s))
\end{equation*}%
where $F_{r}(s)$, $\lambda_{1}(M_{i}(s))$ and $\sigma(M_{i}(s))$ are given as in \eqref{FD}, \eqref{MaxEigMi} and \eqref{SM}, respectively.
\end{theorem}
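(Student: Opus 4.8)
The plan is to show that the deformed Laplacian $M_G(s)$ fits exactly the block template \eqref{R} demanded by Theorem~\ref{H-Join_distance_eigenvalues}, and then simply to invoke that theorem. Concretely, I would first verify that, under the chosen vertex labelling, $M_G(s)=I-sA+s^2(D-I)$ decomposes into the block form \eqref{D}, i.e. that its diagonal blocks are the matrices $M_i(s)$ of \eqref{Mimatrix} and its off-diagonal $(i,j)$ block equals $\delta_{ij}\mathbf{1}_{n_i}\mathbf{1}_{n_j}^T$ with $\delta_{ij}=-s$ when $ij\in E(H)$ and $\delta_{ij}=0$ otherwise.

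The computation of the degrees is the organizing step. Every vertex of $G_i$ is adjacent to its $d_i$ neighbours inside the $d_i$-regular graph $G_i$ and, by the definition of the $H$-join, to all $n_j$ vertices of $G_j$ for each $j\in N_H(i)$. Hence each vertex of $G_i$ has the same degree $d_i+N_i$, where $N_i=\sum_{j\in N_H(i)}n_j$ as in Remark~\ref{N1}. Consequently $D$ restricted to the $i$-th block is the scalar matrix $(d_i+N_i)I_{n_i}$, and the $i$-th diagonal block of $I+s^2(D-I)-sA$ becomes $\big(1+s^2(d_i+N_i-1)\big)I_{n_i}-sA(G_i)$, which is exactly $M_i(s)$ of \eqref{Mimatrix}. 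For $i\neq j$ there is no diagonal or identity contribution, and the $(i,j)$ adjacency block is $\mathbf{1}_{n_i}\mathbf{1}_{n_j}^T$ or $0$ according to whether $ij\in E(H)$; multiplying by $-s$ yields the claimed $\delta_{ij}$. This establishes that $M_G(s)$ has the form \eqref{R}.

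The hypothesis of Theorem~\ref{H-Join_distance_eigenvalues} that still must be checked is that $\mathbf{1}_{n_i}$ is a common eigenvector of each diagonal block, and this is precisely where the regularity of $G_i$ enters. Since $A(G_i)\mathbf{1}_{n_i}=d_i\mathbf{1}_{n_i}$, we obtain from \eqref{Mimatrix} that
\[
M_i(s)\mathbf{1}_{n_i}=\big(1+s^2(d_i+N_i-1)-sd_i\big)\mathbf{1}_{n_i},
\]
so $\mu_i=\lambda_1(M_i(s))=s^2(d_i+N_i-1)-sd_i+1$ exactly as in \eqref{MaxEigMi}, and the matrix $F_r$ of \eqref{C} specializes to $F_r(s)$ of \eqref{FD}. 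The remaining eigenvalues $\sigma(M_i(s))$ are then read off from $\sigma(A(G_i))$ as in \eqref{SM}. With all hypotheses verified, Theorem~\ref{H-Join_distance_eigenvalues} applies verbatim and yields the stated decomposition of $\sigma(M_G(s))$.

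I expect the only real obstacle to be bookkeeping rather than mathematics: one must be careful that the degree is genuinely constant on each block, so that $D$ is block-scalar and the diagonal block reduces to a multiple of $I_{n_i}$ plus $-sA(G_i)$, and that the off-diagonal blocks are exact rank-one all-ones blocks, since any deviation from these two facts would break the structural match with \eqref{R}. Both follow directly from the regularity of the $G_i$ and the definition of the $H$-join, so once they are recorded the result is immediate from the cited theorem.
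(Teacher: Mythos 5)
Your proposal is correct and follows exactly the route the paper takes: the paper's proof consists precisely of Remark~\ref{N1} (degree count $d_i+N_i$ giving the block-scalar form \eqref{Mimatrix}, regularity giving $M_i(s)\mathbf{1}_{n_i}=\lambda_1(M_i(s))\mathbf{1}_{n_i}$ as in \eqref{MaxEigMi}, and the spectrum \eqref{SM}) followed by a direct application of Theorem~\ref{H-Join_distance_eigenvalues} to the block form \eqref{D}. You have merely written out the bookkeeping the paper leaves implicit, so the argument matches the published proof in substance and in structure.
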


When $H=P_{r}$ or $H=C_{r}$, Theorem \ref{TD} allows us to obtain the following results:
\begin{corollary}\label{CorCam}
	Let $H=P_{r}$. If for each $i \in \{1, \ldots, r\}$, $G_i$ is a $d_{i}$-degree regular graph of order $n_{i}$, then the spectrum of $M_{G}(s)$,
	where $G=\bigvee\limits_{P_{r}}{\{G_{i}:1\leq i\leq r\}}$, is given by
	\begin{equation*}
		\sigma(M_{G}(s))=\bigcup _{i=1}^{r}\bigg(\sigma(M_{i}(s))-\{\lambda_{1}(M_{i}(s))\}\bigg) \cup \sigma(F_{r}(s))
	\end{equation*}%
	where $\lambda_{1}(M_{i}(s))$ and $\sigma(M_{i}(s))$ are given as in \eqref{MaxEigMi} and \eqref{SM}, respectively and $F_{r}(s)$ is the tridiagonal matrix given by
\begin{align*}
	F_{r}(s)=\begin{bmatrix}
		\lambda_{1}(M_{1}(s)) & -s\sqrt{n_{1}n_{2}} \\
		-s\sqrt{n_{1}n_{2}} & \lambda_{1}(M_{2}(s)) & -s\sqrt{n_{2}n_{3}} \\
		& -s\sqrt{n_{2}n_{3}} & \lambda_{1}(M_{3}(s)) & -s\sqrt{n_{3}n_{4}} \\
		& & -s\sqrt{n_{3}n_{4}} & \ddots & \ddots \\
		& & & \ddots & \ddots &
		\\
		& & & & \lambda_{1}(M_{r-1}(s)) & -s\sqrt{n_{r-1}n_{r}} \\
		& & & & -s\sqrt{n_{r-1}n_{r}} & \lambda_{1}(M_{r}(s))
	\end{bmatrix}.
\end{align*}
\end{corollary}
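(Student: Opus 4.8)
The plan is to obtain Corollary~\ref{CorCam} as a direct specialization of Theorem~\ref{TD} to the case $H=P_r$. Since the path $P_r$ is connected, the hypotheses of Theorem~\ref{TD} are satisfied, so the spectral decomposition
$$\sigma(M_G(s)) = \bigcup_{i=1}^r \big(\sigma(M_i(s)) - \{\lambda_1(M_i(s))\}\big) \cup \sigma(F_r(s))$$
holds verbatim, with $\lambda_1(M_i(s))$ and $\sigma(M_i(s))$ given by \eqref{MaxEigMi} and \eqref{SM}. Thus the entire content of the corollary reduces to identifying the explicit form of the quotient matrix $F_r(s)$ coming from \eqref{FD}.

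First I would record the edge set of the path: in $P_r$ we have $ij\in E(P_r)$ if and only if $|i-j|=1$. Recalling from Subsection~\ref{SDLspect} that the scalars are $\delta_{ij}=-s$ when $ij\in E(H)$ and $\delta_{ij}=0$ otherwise, this forces $\delta_{i,i+1}=\delta_{i+1,i}=-s$ for $1\le i\le r-1$, while every entry $\delta_{ij}$ with $|i-j|\ge 2$ vanishes.

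Substituting these values into the generic matrix \eqref{FD}, the diagonal entries remain $\lambda_1(M_i(s))$, the two neighbouring diagonals become $\delta_{i,i+1}\sqrt{n_i n_{i+1}}=-s\sqrt{n_i n_{i+1}}$, and all remaining off-diagonal entries are zero. This is precisely the tridiagonal matrix displayed in the statement, which completes the proof. There is no genuine difficulty in this argument: the only point requiring care is the bookkeeping that the sole surviving off-diagonal blocks correspond to consecutive indices, which is immediate from the linear structure of $P_r$. One may also note, although the statement leaves it implicit via the reference to \eqref{MaxEigMi}, that the quantities $N_i$ entering $\lambda_1(M_i(s))$ here specialize to $N_1=n_2$, $N_r=n_{r-1}$, and $N_i=n_{i-1}+n_{i+1}$ for interior indices $2\le i\le r-1$.
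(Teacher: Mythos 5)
Your proof is correct and follows exactly the paper's (implicit) argument: the paper derives Corollary~\ref{CorCam} as a direct specialization of Theorem~\ref{TD} to $H=P_r$, with the tridiagonal form of $F_r(s)$ coming from $\delta_{i,i+1}=-s$ and $\delta_{ij}=0$ for $|i-j|\geq 2$, just as you state. Your added remark making the values $N_1=n_2$, $N_r=n_{r-1}$, $N_i=n_{i-1}+n_{i+1}$ explicit is a small, accurate clarification of what the paper leaves implicit.
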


\begin{corollary}\label{CorCic}
	Let $H=C_{r}$. If for each $i \in \{1, \ldots, r\}$, $G_i$ is a $d_{i}$-degree regular graph, then the spectrum of $M_{G}(s)$,
	where $G=\bigvee\limits_{C_{r}}{\{G_{i}:1\leq i\leq r\}}$, is given by
	\begin{equation*}
		\sigma(M_{G}(s))=\bigcup _{i=1}^{r}\bigg(\sigma(M_{i}(s))-\{\lambda_{1}(M_{i}(s))\}\bigg) \cup \sigma(F_{r}(s))
	\end{equation*}%
	where $\lambda_{1}(M_{i}(s))$ and $\sigma(M_{i}(s))$ are given as in \eqref{MaxEigMi} and \eqref{SM}, respectively and $F_{r}(s)$ is the periodic Jacobi matrix given by
	\begin{align*}
		F_{r}(s)=\begin{bmatrix}
			\lambda_{1}(M_{1}(s)) & -s\sqrt{n_{1}n_{2}} & & & & -s\sqrt{n_{1}n_{r}}  \\
			-s\sqrt{n_{1}n_{2}} & \lambda_{1}(M_{2}(s)) & -s\sqrt{n_{2}n_{3}} \\
			& -s\sqrt{n_{2}n_{3}} & \lambda_{1}(M_{3}(s)) & -s\sqrt{n_{3}n_{4}} \\
			& & -s\sqrt{n_{3}n_{4}} & \ddots & \ddots \\
			& & & \ddots & \ddots &
			\\
			& & & & \lambda_{1}(M_{r-1}(s)) & -s\sqrt{n_{r-1}n_{r}} \\
			-s\sqrt{n_{1}n_{r}} & & & & -s\sqrt{n_{r-1}n_{r}} & \lambda_{1}(M_{r}(s))
		\end{bmatrix}.
	\end{align*}
\end{corollary}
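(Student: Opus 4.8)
The plan is to obtain Corollary~\ref{CorCic} as a direct specialization of Theorem~\ref{TD} to the case $H = C_r$, exactly paralleling the way Corollary~\ref{CorCam} was derived for $H = P_r$. Since the cycle $C_r$ is a connected graph of order $r$, the hypotheses of Theorem~\ref{TD} are satisfied, and the spectral decomposition
\[
\sigma(M_G(s)) = \bigcup_{i=1}^r \big(\sigma(M_i(s)) - \{\lambda_1(M_i(s))\}\big) \cup \sigma(F_r(s))
\]
holds verbatim. The blocks $M_i(s)$, their distinguished eigenvalues $\lambda_1(M_i(s))$, and the spectra $\sigma(M_i(s))$ have already been computed in Remark~\ref{N1} via \eqref{MaxEigMi} and \eqref{SM}, where now $N_i = n_{i-1} + n_{i+1}$ with indices read cyclically (so that $N_1 = n_2 + n_r$ and $N_r = n_{r-1} + n_1$). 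Thus the only remaining task is to identify the explicit form of the quotient matrix $F_r(s)$ for the cycle.

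First I would recall from \eqref{FD} that the $(i,j)$ off-diagonal entry of $F_r(s)$ equals $\delta_{ij}\sqrt{n_i n_j}$, where $\delta_{ij} = -s$ whenever $ij \in E(H)$ and $\delta_{ij} = 0$ otherwise. For $H = C_r$ the edge set is $E(C_r) = \{\{1,2\}, \{2,3\}, \ldots, \{r-1,r\}, \{r,1\}\}$. Hence the nonzero off-diagonal entries are exactly $\delta_{i,i+1}\sqrt{n_i n_{i+1}} = -s\sqrt{n_i n_{i+1}}$ for $1 \leq i \leq r-1$, together with the wrap-around pair $\delta_{1,r}\sqrt{n_1 n_r} = -s\sqrt{n_1 n_r}$ occupying the two corners $(1,r)$ and $(r,1)$. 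Inserting the diagonal entries $\lambda_1(M_i(s))$ from \eqref{MaxEigMi} then reproduces precisely the periodic Jacobi matrix displayed in the statement.

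The step I expect to require the most care is the bookkeeping of the cyclic index convention: I must confirm that the single additional edge $\{r,1\}$ of $C_r$ (relative to the path $P_r$) is the sole source of the corner entries $-s\sqrt{n_1 n_r}$, and that this same edge simultaneously modifies $N_1$ and $N_r$ — and hence the corner diagonal entries $\lambda_1(M_1(s))$ and $\lambda_1(M_r(s))$ through \eqref{MaxEigMi}. Beyond this indexing verification the result is immediate, since no new spectral argument is needed once Theorem~\ref{TD} is invoked; the periodic (rather than strictly tridiagonal) structure of $F_r(s)$ is simply the matricial shadow of the cycle's extra edge.
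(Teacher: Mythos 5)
Your proposal is correct and matches the paper exactly: the paper states Corollary~\ref{CorCic} without a separate proof, presenting it as an immediate specialization of Theorem~\ref{TD} to $H=C_r$, which is precisely your argument. Your careful bookkeeping of the cyclic convention — the extra edge $\{r,1\}$ producing both the corner entries $-s\sqrt{n_1 n_r}$ in $F_r(s)$ and the cyclic values $N_1=n_2+n_r$, $N_r=n_{r-1}+n_1$ in \eqref{MaxEigMi} — is exactly the verification the paper leaves implicit.
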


\subsubsection{Some particular cases of Corollary \ref{CorCam}}
It is well known that the determinant of a symmetric tridiagonal matrix $A$ of order $n$ satisfies the following recurrence relation:
\begin{align}\label{det}
f_{n}=\vert A\vert = \begin{vmatrix}
a_1 & b_1 & 0 & \cdots & 0 \\
b_1 & a_2 & b_2 & \ddots & \vdots \\
0 & b_2 & a_3 & \ddots & 0 \\
\vdots & \ddots & \ddots & \ddots & b_{n-1} \\
0 & \cdots & 0 & b_{n-1} & a_n
\end{vmatrix} = a_{n}f_{n-1}-b^{2}_{n-1}f_{n-2}  , 
\end{align}
where $f_{n-1}$ and $f_{n-2}$ are the determinants of the leading principal submatrices of $A$ of order $n-1$ and $n-2$, respectively, with $f_{0}=1$.

\begin{Res}\label{ResRD}
Let $H=P_{3}$. Let $G_{i}$ be a $d_{i}-$regular graph of order $n_{i}$, $i=1,2,3$. If $d_{1}=d_{3}$ and $G=\bigvee\limits_{P_{3}}{\{G_{1}, G_{2}, G_{3}\}}$, then
		\begin{equation*}
			\sigma(M_{G}(s))=\bigcup _{i=1}^{3}\sigma(M_{i}(s)) \cup \left\{\frac{1}{2}\left(a(s)+b(s)\pm \sqrt{\left(a(s)-b(s)\right)^{2}+4s^{2}n_{2}(n_{1}+n_{3})}\right)\right\}-\{a(s), \ b(s)\},
	\end{equation*}
\noindent
where 
\begin{align*}
 a(s)&=\lambda_{1}(M_{1}(s))=s^{2}(d_{1}+n_{2}-1)-s d_{1}+1, \\ \\ b(s)&=\lambda_{1}(M_{2}(s))=s^{2}(d_{2}+(n_{1}+n_{2})-1)-s d_{2}+1   
\end{align*}
and $\sigma(M_{i}(s))$, $i=1,2,3$, is given as in \eqref{SM}.
\end{Res}
\begin{proof}
Since $d_{1}=d_{3}$, from \eqref{MaxEigMi}, it follows that $a(s)=\lambda_{1}(M_{1}(s))=\lambda_{1}(M_{3}(s))$. Therefore,
\begin{align*}
	F_{3}(s)=\begin{bmatrix}
		a(s) & -s\sqrt{n_{1}n_{2}} & 0 \\
		-s\sqrt{n_{1}n_{2}} & b(s) & -s\sqrt{n_{2}n_{3}} \\
		0 & -s\sqrt{n_{2}n_{3}} & a(s)
	\end{bmatrix}.
\end{align*}
From \eqref{det}, we have
\begin{align*}
\vert \lambda I-F_{3}(s)\vert &=(\lambda-a(s))\begin{vmatrix}
		a(s) & -s\sqrt{n_{1}n_{2}} \\
		-s\sqrt{n_{1}n_{2}} & b(s)
        \end{vmatrix}-s^{2}n_{2}n_{3}(\lambda-a(s)) \\ \\ &=(\lambda-a(s))\left(\lambda^{2}-(a(s)+b(s))\lambda+a(s)b(s)-s^{2}n_{2}(n_{1}+n_{3})\right).
\end{align*}
Consequently,
\begin{align*}
\sigma(F_{3}(s))=\left\{a(s), \ \ \frac{1}{2}\left(a(s)+b(s)\pm \sqrt{\left(a(s)-b(s)\right)^{2}+4s^{2}n_{2}(n_{1}+n_{3})}\right)\right\}.
\end{align*}
Thus, from Corollary \ref{CorCam}, the proof follows.
\end{proof}

Next, we illustrate Result \ref{ResRD} with the following example:
\begin{example}\label{ExRD}
	Let $G=\bigvee\limits_{P_3}\{C_{4}, P_{2}, C_{6}\}$ be the graph depicted in Example \ref{ExDOT}. Taking into account \eqref{SM} and Result \ref{ResRD}, we explicitly obtain the spectrum of $M_{G}(s)$ as follows:	
    \begin{multline*}
		\sigma(M_{G}(s))=\bigg\{\frac{1}{2}\left(13s^{2}-3s+2\pm s\sqrt{49s^{2}+14s+81}\right) \ , \ 3s^{2}-2s+1 \ , \\ (3s^{2}+1)^{[4]} \ , \ (3s^{2}+2s+1)^{[2]} \ , \ 10s^{2}+s+1 \ , \ 3s^{2}+s+1 \ , \ 3s^{2}-s+1\bigg\}.
	\end{multline*}
\end{example}

\begin{Res}\label{ResRD1}
	Let $H=P_{4}$. Let $G_{i}$ be a $d_{i}-$regular graph of order $n_{i}$, $i=1,2$. Then the spectrum of $M_{G}(s)$,
	where $G=\bigvee\limits_{P_{4}}\{G_{1},G_{2},G_{2},G_{1}\}$, is given by
			\begin{equation*}
			\sigma(M_{G}(s))=\bigg(\sigma(M_{1}(s))^{[2]}-\{a(s)^{[2]}\}\bigg) \cup \bigg(\sigma(M_{2}(s))^{[2]}-\{b(s)^{[2]}\}\bigg) \cup \sigma(F_{4}(s)),
	\end{equation*}
\noindent
where $\sigma(M_{i}(s))^{[2]}$ means that each eigenvalue of $M_{i}(s)$, $i=1,2$, doubles its multiplicity and
\begin{align}\label{SpectF4}
\sigma(F_{4}(s))=\Bigg\{&\frac{1}{2}\bigg(a(s)+b(s)+sn_{2}\pm \sqrt{\big(a(s)-b(s)-sn_{2}\big)^{2}+4s^{2}n_{1}n_{2}}\bigg) \ , \notag \\ \notag \\ 
&\frac{1}{2}\bigg(a(s)+b(s)-sn_{2}\pm \sqrt{\big(a(s)-b(s)+sn_{2}\big)^{2}+4s^{2}n_{1}n_{2}}\bigg) \Bigg\},
\end{align}
being
\begin{align*}
 a(s)&=\lambda_{1}(M_{1}(s))=s^{2}(d_{1}+n_{2}-1)-s d_{1}+1, \\ \\ b(s)&=\lambda_{1}(M_{2}(s))=s^{2}(d_{2}+(n_{1}+n_{2})-1)-s d_{2}+1   
\end{align*}
and $\sigma(M_{i}(s))$, $i=1,2$, is given as in \eqref{SM}.
\end{Res}

\begin{proof}
Since $G=\bigvee\limits_{P_{4}}\{G_{1},G_{2},G_{2},G_{1}\}$, it follows that 
\begin{align*}
	F_{4}(s)=\begin{bmatrix}
		a(s) & -s\sqrt{n_{1}n_{2}} \\
		-s\sqrt{n_{1}n_{2}} & b(s) & -sn_{2} \\
		& -sn_{2} & b(s) & -s\sqrt{n_{1}n_{2}} \\
		& & -s\sqrt{n_{1}n_{2}} & a(s)
	\end{bmatrix}.
\end{align*}
From \eqref{det}, we have
\begin{align*}
\vert \lambda I-F_{4}(s)\vert &=(\lambda-a(s))f_{3}-s^{2}n_{1}n_{2}f_{2} \\ \\ &=(\lambda-a(s))\bigg((\lambda-b(s))f_{2}-s^{2}n^{2}_{2}(\lambda-a(s))\bigg)-s^{2}n_{1}n_{2}f_{2} \\ \\ &=f_{2}\bigg((\lambda-a(s))(\lambda-b(s))-s^{2}n_{1}n_{2}\bigg)-s^{2}n^{2}_{2}(\lambda-a(s))^{2} \\ \\ &=\bigg((\lambda-a(s))(\lambda-b(s))-s^{2}n_{1}n_{2}\bigg)^{2}-\bigg(sn_{2}(\lambda-a(s))\bigg)^{2},
\end{align*}
where $f_{3}$ and $f_{2}$ are the determinants of the leading principal submatrices of $\lambda I-F_{4}(s)$ of order $3$ and $2$, respectively.
Consequently, $\vert \lambda I-F_{4}(s)\vert=0$ if and only if
\begin{align*}
 \lambda^{2}-(a(s)+b(s)+sn_{2})\lambda+a(s)b(s)+sn_{2}a(s)-s^{2}n_{1}n_{2}&=0, \qquad \text{or} \\ \\
 \lambda^{2}-(a(s)+b(s)-sn_{2})\lambda+a(s)b(s)-sn_{2}a(s)-s^{2}n_{1}n_{2}&=0.
\end{align*}\ 

Thus, we find that the spectrum of $\sigma(F_{4}(s))$ is given by \eqref{SpectF4}. From Corollary \ref{CorCam}, the proof follows.
\end{proof}

\begin{example}\label{ExRD1}
Let $H=P_{4}$, $G_{1}=P_{2}$ and $G_{2}=C_{3}$. The graph $G=\bigvee\limits_{P_4}\{P_{2},C_{3},C_{3},P_{2}\}$ is depicted in Figure $5$:   
\begin{eqnarray*}
\definecolor{qqqqff}{rgb}{0.,0.,1.}
\begin{tikzpicture}
\draw [line width=1.2pt] (2.,2.)-- (3.,3.);
\draw [line width=1.2pt] (3.,3.)-- (3.,1.);
\draw [line width=1.2pt] (2.,2.)-- (3.,1.);
\draw [line width=1.2pt] (5.,3.)-- (6.,2.);
\draw [line width=1.2pt] (6.,2.)-- (5.,1.);
\draw [line width=1.2pt] (5.,3.)-- (5.,1.);
\draw [line width=1.2pt] (0.,1.)-- (0.,-1.);
\draw (8.,1.)-- (8.,-1.);
\draw (0.,1.)-- (2.,2.);
\draw (0.,1.)-- (3.,3.);
\draw (0.,1.)-- (3.,1.);
\draw (0.,-1.)-- (3.,1.);
\draw (3.,3.)-- (0.,-1.);
\draw (2.,2.)-- (0.,-1.);
\draw (3.,3.)-- (5.,3.);
\draw (3.,3.)-- (5.,1.);
\draw (3.,3.)-- (6.,2.);
\draw (3.,1.)-- (5.,3.);
\draw (3.,1.)-- (5.,1.);
\draw (3.,1.)-- (6.,2.);
\draw (2.,2.)-- (5.,3.);
\draw (2.,2.)-- (6.,2.);
\draw (2.,2.)-- (5.,1.);
\draw (6.,2.)-- (8.,1.);
\draw (6.,2.)-- (8.,-1.);
\draw (5.,3.)-- (8.,1.);
\draw (5.,3.)-- (8.,-1.);
\draw (5.,1.)-- (8.,1.);
\draw (5.,1.)-- (8.,-1.);
\begin{scriptsize}
\draw [fill=qqqqff] (2.,2.) circle (1.5pt);
\draw [fill=qqqqff] (3.,1.) circle (1.5pt);
\draw [fill=qqqqff] (3.,3.) circle (1.5pt);
\draw [fill=qqqqff] (6.,2.) circle (1.5pt);
\draw [fill=qqqqff] (5.,1.) circle (1.5pt);
\draw [fill=qqqqff] (5.,3.) circle (1.5pt);
\draw [fill=qqqqff] (8.,1.) circle (1.5pt);
\draw [fill=qqqqff] (8.,-1.) circle (1.5pt);
\draw [fill=qqqqff] (0.,1.) circle (1.5pt);
\draw [fill=qqqqff] (0.,-1.) circle (1.5pt);
\end{scriptsize}
\end{tikzpicture}&\\ \\
\text{{\bf Figure 5}. The graph $G=\bigvee\limits_{P_4}\{P_{2}, C_{3}, C_{3}, P_{2}\}$.}
\end{eqnarray*}
Taking into account \eqref{SM} and Result \ref{ResRD1}, we explicitly obtain the spectrum of $M_{G}(s)$ as follows:	
    \begin{align*}
		\sigma(M_{G}(s))=\bigg\{&\frac{1}{2}\left(9s^{2}+2\pm s\sqrt{9s^{2}+12s+28}\right) \ , \ \frac{1}{2}\left(9s^{2}-6s+2\pm s\sqrt{9s^{2}-24s+40}\right) \ , \\ \\ &(3s^{2}+s+1)^{[2]} \ , \ (6s^{2}-s+1)^{[2]} \ , \ (6s^{2}+s+1)^{[2]}\bigg\}.
	\end{align*}
\end{example}

\subsubsection{A particular case of Corollary \ref{CorCic}}
By proceeding with reasoning analogous to that used in Result \ref{ResRD1}, we get:
\begin{Res}\label{ResRD2}
	Let $H=C_{4}$. Let $G_{i}$ be a $d_{i}-$regular graph of order $n_{i}$, $i=1,2$. Then the spectrum of $M_{G}(s)$,
	where $G=\bigvee\limits_{C_{4}}\{G_{1},G_{2},G_{2},G_{1}\}$, is given by
	then
		\begin{equation*}
			\sigma(M_{G}(s))=\bigg(\sigma(M_{1}(s))^{[2]}-\{a(s)^{[2]}\}\bigg) \cup \bigg(\sigma(M_{2}(s))^{[2]}-\{b(s)^{[2]}\}\bigg) \cup \sigma(F_{4}(s)),
	\end{equation*}
\noindent
where $\sigma(M_{i}(s))^{[2]}$ means that each eigenvalue of $M_{i}(s)$, $i=1,2$, doubles its multiplicity and
\begin{align}\label{SpectF4}
\sigma(F_{4}(s))=\Bigg\{&\frac{1}{2}\bigg(a(s)+b(s)+s(n_{1}+n_{2})\pm \sqrt{\big(a(s)-b(s)+s(n_{1}-n_{2})\big)^{2}+4s^{2}n_{1}n_{2}}\bigg) \ , \notag \\ \notag \\ 
&\frac{1}{2}\bigg(a(s)+b(s)-s(n_{1}+n_{2})\pm \sqrt{\big(a(s)-b(s)-s(n_{1}-n_{2})\big)^{2}+4s^{2}n_{1}n_{2}}\bigg) \Bigg\},
\end{align}
being
\begin{align*}
 a(s)&=\lambda_{1}(M_{1}(s))=s^{2}(d_{1}+n_{2}-1)-s d_{1}+1, \\ \\ b(s)&=\lambda_{1}(M_{2}(s))=s^{2}(d_{2}+(n_{1}+n_{2})-1)-s d_{2}+1   
\end{align*}
and $\sigma(M_{i}(s))$, $i=1,2$, is given as in \eqref{SM}.
\end{Res}

\begin{example}\label{ExRD1}
Let $H=C_{4}$, $G_{1}=P_{2}$ and $G_{2}=C_{3}$. The graph $G=\bigvee\limits_{C_4}\{P_{2},C_{3},C_{3},P_{2}\}$ is depicted in Figure $6$:   
\begin{eqnarray*}
\definecolor{qqqqff}{rgb}{0.,0.,1.}
\begin{tikzpicture}
\draw [line width=1.2pt] (2.,2.)-- (3.,3.);
\draw [line width=1.2pt] (3.,3.)-- (3.,1.);
\draw [line width=1.2pt] (2.,2.)-- (3.,1.);
\draw [line width=1.2pt] (5.,3.)-- (6.,2.);
\draw [line width=1.2pt] (6.,2.)-- (5.,1.);
\draw [line width=1.2pt] (5.,3.)-- (5.,1.);
\draw [line width=1.2pt] (1.,-1.)-- (2.,-2.);
\draw [line width=1.2pt] (7.,-1.)-- (6.,-2.);
\draw (1.,-1.)-- (2.,2.);
\draw (1.,-1.)-- (3.,3.);
\draw (1.,-1.)-- (3.,1.);
\draw (2.,-2.)-- (3.,1.);
\draw (3.,3.)-- (2.,-2.);
\draw (2.,2.)-- (2.,-2.);
\draw (3.,3.)-- (5.,3.);
\draw (3.,3.)-- (5.,1.);
\draw (3.,3.)-- (6.,2.);
\draw (3.,1.)-- (5.,3.);
\draw (3.,1.)-- (5.,1.);
\draw (3.,1.)-- (6.,2.);
\draw (2.,2.)-- (5.,3.);
\draw (2.,2.)-- (6.,2.);
\draw (2.,2.)-- (5.,1.);
\draw (6.,2.)-- (7.,-1.);
\draw (6.,2.)-- (6.,-2.);
\draw (5.,3.)-- (7.,-1.);
\draw (5.,3.)-- (6.,-2.);
\draw (5.,1.)-- (7.,-1.);
\draw (5.,1.)-- (6.,-2.);
\draw (1.,-1.)-- (7.,-1.);
\draw (1.,-1.)-- (6.,-2.);
\draw (2.,-2.)-- (6.,-2.);
\draw (2.,-2.)-- (7.,-1.);
\begin{scriptsize}
\draw [fill=qqqqff] (2.,2.) circle (1.5pt);
\draw [fill=qqqqff] (3.,1.) circle (1.5pt);
\draw [fill=qqqqff] (3.,3.) circle (1.5pt);
\draw [fill=qqqqff] (6.,2.) circle (1.5pt);
\draw [fill=qqqqff] (5.,1.) circle (1.5pt);
\draw [fill=qqqqff] (5.,3.) circle (1.5pt);
\draw [fill=qqqqff] (7.,-1.) circle (1.5pt);
\draw [fill=qqqqff] (6.,-2.) circle (1.5pt);
\draw [fill=qqqqff] (1.,-1.) circle (1.5pt);
\draw [fill=qqqqff] (2.,-2.) circle (1.5pt);
\end{scriptsize}
\end{tikzpicture}&\\ \\
\text{{\bf Figure 6}. The graph $G=\bigvee\limits_{C_4}\{P_{2}, C_{3}, C_{3}, P_{2}\}$.}
\end{eqnarray*}
Taking into account \eqref{SM} and Result \ref{ResRD2}, we explicitly obtain the spectrum of $M_{G}(s)$ as follows:	
    \begin{align*}
		\sigma(M_{G}(s))=\bigg\{&\frac{1}{2}\left(9s^{2}-8s+2\pm s\sqrt{9s^{2}-12s+28}\right) \ , \ \frac{1}{2}\left(9s^{2}+2s+2\pm s\sqrt{27s^{2}+72}\right) \ , \\ \\ &(3s^{2}+s+1)^{[2]} \ , \ (6s^{2}-s+1)^{[2]} \ , \ (6s^{2}+s+1)^{[2]}\bigg\}.
	\end{align*}
\end{example}

\vspace{0.5cm}

\section*{Acknowledgements.}
The research of R. C. Díaz was supported by ANID Programa Fondecyt de Iniciaci\'on en Investigaci\'on 2024, N° 11240142, Chile. Elismar Oliveira and Vilmar Trevisan are partially supported by MATH-AMSUD under project GSA, financed by CAPES - process 88881.694479/2022-01, and by CNPq grant 408180/2023-4. Vilmar Trevisan also acknowledges the suport of CNPq grant 310827/2020-5.

\end{document}